\documentclass[reqno]{amsart}
\usepackage{hyperref}

\begin{document}

\title[\hfil Existence Results to a nonlinear $p(k)$-Laplacian difference equation] {Existence Results to a nonlinear $p(k)$-Laplacian difference equation}

\author[M. Khaleghi Moghadam \hfilneg]
{Mohsen Khaleghi Moghadam}

\address{Mohsen Khaleghi Moghadam \newline
Department of Basic Science, Sari Agricultural Sciences and natural
Resources University, 578 Sari, Iran\newline }
\email{mohsen.khaleghi@rocketmail.com \quad and \quad
m.khaleghi@sanru.ac.ir}
\author[M. Avci\hfilneg]
{Mustafa Avci}
\address{ Mustafa Avci \newline
Faculty of Economics and Administrative Sciences, Batman University,
Turkey  } \email{ avcixmustafa@gmail.com}

 \subjclass[2000]{39F20, 34B15}
\keywords{Discrete nonlinear boundary value problem; Non trivial
solution; Variational methods; Critical point theory.}

\begin{abstract}
In the present paper, by using variational method, the existence of non-trivial solutions to an anisotropic discrete non-linear problem involving $p(k)$-Laplacian operator with Dirichlet boundary condition is investigated. The main technical tools applied here are the two local minimum theorems for differentiable functionals given by Bonanno.
\end{abstract}

\maketitle \numberwithin{equation}{section}
\newtheorem{theorem}{Theorem}[section]
\newtheorem{proposition}[theorem]{Proposition}
\newtheorem{corollary}[theorem]{Corollary}
\newtheorem{lemma}[theorem]{Lemma}
\newtheorem{remark}[theorem]{Remark}
\newtheorem{example}[theorem]{Example}
\allowdisplaybreaks

\section{Introduction}
The main goal of the present paper is to establish the existence of non-trivial
solution for the following discrete anisotropic problem
\begin{equation}
\label{1}
\begin{cases}
-\Delta(w(k-1)|\Delta u(k-1)|^{p(k-1)-2}\Delta u(k-1))+q(k)|u(k)|^{p(k)-2}u(k)=\lambda f(k,u(k)), \\
u(0)=u(T+1)=0,
\end{cases}
\end{equation}
for any $k \in [1,T]$, where $T$ is a fixed positive integer, $[1,T]$ is the discrete interval $\{1,...,T\}$,  $f: [1,T]\times\mathbb{R}\to\mathbb{R}$ is a continuous function, $\lambda>0$ is a parameter and $w :[0,T]\to[1,\infty)$ is a fix function such that
\begin{equation*}
w^{-}:=\min_{k\in \lbrack 0,T]}w(k),\ \ \ \ w^{+}:=\max_{k\in
\lbrack 0,T]}w(k),
\end{equation*}%
 and $\Delta u(k)=u(k+1)-u(k)$ is the forward difference operator and the function $p :[0,T+1]\to[2,\infty)$ is bounded, we denote for short
 $$p^+:=\max_{k\in[0,T+1]}p(k) \quad \text{and} \quad p^-:=\min_{k\in[0,T+1]}p(k),$$ and the function $q :[0,T+1]\to[1,\infty)$ is bounded such that
\begin{equation*}
q^{-} :=\min_{k\in \lbrack 1,T+1]}q(k)\geq1,\ \ \ \
q^{+}:=\max_{k\in \lbrack
1,T+1]}q(k) \\
\end{equation*}

We want to remark that problem \eqref{1} is the discrete variant of the variable exponent anisotropic problem
\begin{equation}\label{1-1}
\begin{cases}
-\sum_{i=1}^{N}\frac{\partial}{\partial x_i}(w_i(x)|\frac{\partial u}{\partial x_i}|^{p_i(x)-2}\frac{\partial u}{\partial x_i})+q(x)|u|^{p_i(x)-2} u=\lambda
 f(x,u), \quad x \in \Omega, \\
u=0, x \in \partial\Omega,
\end{cases}
\end{equation}
 where $\Omega\subset\mathbb{R}^N$, $N\geq 3$
is a bounded domain with smooth boundary, $f\in C(\overline{\Omega}\times\mathbb{R},\mathbb{R})$ is  given function that satisfy certain properties and $p_i(x)$, $w_i(x)\geq1$ and $q(x)\geq1$ are continuous functions on $\overline{\Omega}$ with $2 \leq p_i(x)$ for each $x\in \Omega$ and every
  $i\in\{1,2,\cdots,N\}$,
  $\lambda>0$ is real number.\\

The importance of difference equations arises from its applications to many different fields of research, such as
mechanical engineering, control systems, economics, social sciences, computer
science, physics, artificial or biological neural networks, cybernetics, ecology, to name a few. In this context, anisotropic discrete non-linear problems involving $p(k)$-Laplacian operator seem to have attracted a great deal of attention due to its usefulness of modelling some more complicated phenomenon such us fluid dynamics and nonlinear elasticity. We refer the reader to \cite{APO1,AHH,Avci,AvcPan,AH,BX,BSZ,BC1,CD2,CJ,GMW,HKh,HT,JZ,KhHH,KhH,LL,LG,R} and references therein, where they could find the  detailed background as well as many different approaches and techniques applied in the related area.

\bigskip

In this paper, based on two local minimum theorems, i.e. Theorem \ref{t1} and Theorem \ref{t1-1}) due to Bonanno \cite{B}, we obtain the exact intervals for the parameter $\lambda$, in which the problem \eqref{1} admits non-trivial solutions. In section 2, we recall the main tools (Theorem\ref{t1} and Theorem\ref{t1-1}) and give some basic knowledge. In Section 3, we state and prove our main results of the paper containing several theorems as well as corollaries. Finally, we prove a special case of the main result (Theorem \ref{t1.1}) and illustrate the results by giving concrete examples as applications to \eqref{1}.

\bigskip

At the very beginning, as an example, we give the following special case of our main results.
\begin{theorem}\label{t1.1}Let $T$ is a fixed positive integer. Assume that there exist two positive constants $c$ and $d$ such that

\begin{equation}\label{1-1-1}
    d^{3}<\left(\frac{3}{(T+2)(T+4)(2T+2)^{T+3}}\right)c^{T+4}<\frac{3}{(T+2)(T+4)}.
\end{equation}
 Let $g:\mathbb{R}\to\mathbb{R}$ be a non-negative continuous function such
that $\int_0^tg(s)ds<c_0(1+t^2)$ for any $t\in\mathbb{R}$ and some
$c_0>0$ and
\begin{equation}\label{1-1-2}
\frac{\int_0^cg(\xi)d\xi}{c^{T+4}}
<\left(\frac{3}{(T+4)(T+2)(2T+2)^{T+3}}\right)\frac{\int_0^dg(\xi)d\xi}{d^{3}}.
\end{equation}
Then, for each
$$\lambda\in\Lambda_{c,d}=\left]\frac{d^{3}(T+2)}{3T\int_0^dg(\xi)d\xi},\frac{3c^{T+4}-d^{3}(T+4)(T+2)(2T+2)^{T+3}}{
3T(T+4)(2T+2)^{T+3}\int_d^cg(\xi)d\xi}\right[, $$ the problem
$$\begin{cases}
-\Delta(|\Delta u(k-1)|^{k}\Delta u(k-1))+|u(k)|^{k+1}u(k)=\lambda g(u(k)), \quad k \in [1,T], \\
u(0)=u(T+1)=0,
\end{cases}$$
admits at least one positive solution in the space $\{u : [0,T+ 1]\to \mathbb{R} : u(0) =
u(T+1) =0 \}$.
\end{theorem}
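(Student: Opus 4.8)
The plan is to read this statement as the instance of problem \eqref{1} obtained by taking $w\equiv 1$, $q\equiv 1$, the anisotropic exponent $p(k)=k+3$ (so that $p(k-1)-2=k$ and $p(k)-2=k+1$ reproduce the two operators, and $p^-=p(0)=3$, $p^+=p(T+1)=T+4$), and the autonomous nonlinearity $f(k,t)=g(t)$; then to apply the general existence result of Section~3, whose engine is Bonanno's local minimum theorems (Theorems~\ref{t1} and \ref{t1-1}). First I would fix the $T$-dimensional space $W=\{u:[0,T+1]\to\mathbb{R}:u(0)=u(T+1)=0\}$ and introduce
\[
\Phi(u)=\sum_{k=1}^{T+1}\frac{1}{p(k-1)}\,|\Delta u(k-1)|^{p(k-1)}+\sum_{k=1}^{T}\frac{1}{p(k)}\,|u(k)|^{p(k)},\qquad \Psi(u)=\sum_{k=1}^{T}G(u(k)),
\]
with $G(t)=\int_0^t g(\xi)\,d\xi$, and check by summation by parts (using $u(0)=u(T+1)=0$) that the critical points of $I_\lambda=\Phi-\lambda\Psi$ are exactly the solutions of the problem. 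In this finite-dimensional setting $\Phi$ is continuously differentiable and coercive, and the growth hypothesis $G(t)<c_0(1+t^2)$ forces $\Psi$ to grow at most quadratically, so that $\Phi-\lambda\Psi$ (whose principal part grows at least like $\|u\|^{p^-}=\|u\|^3$) is coercive for every $\lambda>0$; this secures the standing assumptions of the abstract theorems.

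Next I would make the two choices that drive Bonanno's result: the test function and the threshold. For the test function I take $v\in W$ with $v(k)=d$ for all $k\in[1,T]$. Then $\Psi(v)=T\,G(d)=T\int_0^d g$, while only the two boundary jumps $\Delta v(0)=d$ and $\Delta v(T)=-d$ contribute gradient terms; since \eqref{1-1-1} forces $d<1$, each of the $T+2$ summands of $\Phi(v)$ is bounded by $d^3/3$, giving the clean estimate $\Phi(v)\le \frac{d^3(T+2)}{3}$. For the threshold I set $r=\frac{c^{T+4}}{(T+4)(2T+2)^{T+3}}=\frac{c^{p^+}}{p^+\,(2(T+1))^{p^+-1}}$. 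The first inequality in \eqref{1-1-1} is then exactly the statement $\frac{d^3(T+2)}{3}<r$, i.e.\ $\Phi(v)<r$, which is the configuration required by the theorem.

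The crux — and the main obstacle — is the discrete embedding estimate needed to control $\sup_{\Phi(u)\le r}\Psi(u)$. Writing $u(m)=\sum_{j=1}^{m}\Delta u(j-1)$ and using $p(k-1)\le p^+$ together with Hölder's inequality gives a bound of the form $\|u\|_\infty^{p^+}\le p^+\,(2(T+1))^{p^+-1}\,\Phi(u)$ for every $u\in W$; the exponent $p^+-1=T+3$, forced by passing uniformly to the worst exponent $p^+$ in the anisotropic sum, is precisely what produces the constant $(2T+2)^{T+3}$, and getting this factor sharp is the delicate point. Consequently $\Phi(u)\le r$ yields $\|u\|_\infty\le c$, and since $G$ is nondecreasing ($g\ge0$) we obtain $\sup_{\Phi(u)\le r}\Psi(u)\le T\,G(c)=T\int_0^c g$. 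A direct computation then shows that the remaining hypothesis
\[
\frac{\sup_{\Phi(u)\le r}\Psi(u)}{r}<\frac{\Psi(v)}{\Phi(v)}
\]
reduces, after inserting the bounds above, exactly to the quantitative condition \eqref{1-1-2}.

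With both hypotheses verified, the abstract theorem provides, for every $\lambda$ in $\big]\,\Phi(v)/\Psi(v),\,(r-\Phi(v))/(\sup_{\Phi\le r}\Psi-\Psi(v))\,\big[$, a nontrivial critical point $u_\lambda$ of $I_\lambda$ lying in $\Phi^{-1}(]-\infty,r[)$, hence a nontrivial solution. Replacing $\Phi(v)$ by its upper bound $\frac{d^3(T+2)}{3}$ and $\sup_{\Phi\le r}\Psi$ by $T\int_0^c g$ turns this interval into the stated $\Lambda_{c,d}$, the second inequality in \eqref{1-1-1} being what keeps the numerator $3c^{T+4}-d^{3}(T+4)(T+2)(2T+2)^{T+3}$ positive. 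Finally, to upgrade $u_\lambda$ to a \emph{positive} solution I would invoke a discrete maximum principle: the boundary conditions place any negative minimum at an interior node $k_0$, where $\Delta u_\lambda(k_0-1)\le0\le\Delta u_\lambda(k_0)$ makes the difference term $\le0$ while $|u_\lambda(k_0)|^{k_0+1}u_\lambda(k_0)<0$, contradicting $\lambda g(u_\lambda(k_0))\ge0$; hence $u_\lambda\ge0$, and nontriviality together with the equation gives $u_\lambda>0$ on $[1,T]$.
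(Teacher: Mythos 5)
Your route is the one the paper itself takes: its proof of this theorem is a one-line specialization of its result for the autonomous problem \eqref{e3.1} (take $p(k)=k+3$, $w=q=\beta=1$), which in turn unwinds through Theorem \ref{t2} to Bonanno's Theorem \ref{t1}; your test function $\bar v\equiv d$, the threshold $r=c^{T+4}/\bigl((T+4)(2T+2)^{T+3}\bigr)$, the bound $\Phi(\bar v)\le d^{3}(T+2)/3$, the reduction of the minimax condition to \eqref{1-1-2}, and the interval bookkeeping are exactly the same ingredients. However, the step you yourself single out as the crux contains a genuine error. The inequality
\begin{equation*}
\|u\|_\infty^{p^+}\le p^+\,(2T+2)^{p^+-1}\,\Phi(u)\qquad\text{for every }u\in W
\end{equation*}
is false. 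The comparison $|\Delta u(j-1)|^{p^+}\le|\Delta u(j-1)|^{p(j-1)}$ --- which is what ``passing uniformly to the worst exponent'' means --- holds only when $|\Delta u(j-1)|\le 1$; when an increment exceeds $1$ it reverses, precisely because $p(j-1)\le p^+$. Concretely, take $u(k)=M$ for $k\in[1,T]$, $u(0)=u(T+1)=0$, with $M$ large: since $u(T+1)=0$, the exponent $p^+=p(T+1)=T+4$ is never applied to a nonzero entry, so $\Phi(u)$ grows like $M^{T+3}$ while $\|u\|_\infty^{p^+}=M^{T+4}$, and no constant can make your inequality hold for all $u\in W$.

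What you actually need is only the conditional statement $\Phi(u)\le r\Rightarrow\|u\|_\infty\le c$, and it is rescued by exactly the hypothesis you never use. The second inequality in \eqref{1-1-1} is equivalent to $c^{T+4}<(2T+2)^{T+3}$, i.e.\ $r<1/p^+$; hence $\Phi(u)\le r$ forces $\sum_{k}\bigl[|\Delta u(k-1)|^{p(k-1)}+|u(k)|^{p(k)}\bigr]\le p^+\Phi(u)<1$, so every increment is $<1$ in modulus, and only then does your H\"older argument with the worst exponent go through. This is the role that smallness condition plays in the paper (Lemma \ref{L1} together with the requirement $r_1,r_2<1/p^+$ encoded in \eqref{in2}). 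You instead assign the second inequality of \eqref{1-1-1} the job of keeping the numerator $3c^{T+4}-d^{3}(T+2)(T+4)(2T+2)^{T+3}$ positive, which is wrong twice over: that positivity is the \emph{first} inequality of \eqref{1-1-1}, and the smallness condition is left doing nothing. So, as written, the central embedding step rests on a false statement; the repair is short, but it consists precisely in invoking the hypothesis you left idle. The remaining parts --- the coercivity remark, the equivalence of your condition $\sup_{\Phi\le r}\Psi/r<\Psi(\bar v)/\Phi(\bar v)$ with \eqref{1-1-2}, the inclusion of $\Lambda_{c,d}$ in Bonanno's interval, and the discrete maximum principle yielding positivity (where the paper instead cites Remark \ref{r1}) --- are sound.
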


\section{Preliminaries}

First, we give the following definition. For given a set $X$ and two
functionals  $\Phi,\ \Psi:X\to \mathbb{R}$, we defined the following
functions
\begin{equation}\label{2}
\beta(r_1,r_2)=\inf_{v\in \Phi^{-1} (]r_1,r_2[)}\frac{\sup_{u\in \Phi^{-1} (]r_1,r_2[)} \Psi(u)-\Psi(v)}{r_2-\Phi(v)},
\end{equation}
and
\begin{equation}\label{3}
\rho_1(r_1,r_2)=\sup_{v\in \Phi^{-1} (]r_1,r_2[)} \frac{\Psi(v)-\sup_{u\in \Phi^{-1} (]-\infty,r_1])} \Psi(u)}{\Phi(v)-r_1},
\end{equation}
for all $r_1,r_2 \in\mathbb{R}$, with $r_1<r_2$.\\
\begin{equation}\label{3}
\rho_2(r)=\sup_{v\in \Phi^{-1} (]r,\infty[)} \frac{\Psi(v)-\sup_{u\in \Phi^{-1} (]-\infty,r])} \Psi(u)}{\Phi(v)-r},
\end{equation}
for all $r\in\mathbb{R}$.\\
\begin{theorem} (\cite[Theorem 5.1, Proposition 2.1]{B}) \label{t1}
Let $X$ be a reflexive real Banach space,  $ \Phi:X \to \mathbb{R}$ be
a sequentially weakly lower semicontinuous, coercive and continuously G\^ateaux differentiable functional whose G\^ateaux derivative admits a
 continuous inverse on $X^*$ and  $ \Psi:X \to \mathbb{R}$ be a continuously G\^ateaux differentiable functional whose G\^ateaux derivative
  is compact. Put $I_\lambda=\Phi-\lambda\Psi$ and assume that there are $r_1,r_2 \in\mathbb{R}$, with $r_1<r_2$, such that
$$\beta(r_1,r_2)<\rho_1(r_1,r_2),$$
where $\beta$ and $\rho_1$ are given by (\ref{2}) and (\ref{3}). Then, for
each $\lambda\in \Lambda=]\frac{1}{\rho_1(r_1,r_2)},\frac{1}{\beta(r_1,r_2)}[$ there is $u_{0,\lambda}\in \Phi^{-1} (]r_1,r_2[)$
such that $I_\lambda(u_{0,\lambda})\leq I_\lambda(u)$ for all $u\in \Phi^{-1} (]r_1,r_2[)$
and $I'_\lambda(u_{0,\lambda})=0$.
\end{theorem}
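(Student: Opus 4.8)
The plan is to realize $u_{0,\lambda}$ as a minimizer of $I_\lambda$ over the open sublevel strip $\Omega:=\Phi^{-1}(]r_1,r_2[)$, and then to argue that this minimizer is genuinely interior, so that Fermat's rule gives $I'_\lambda(u_{0,\lambda})=0$. First I would record the two standing consequences of the hypotheses. Since $\Psi'$ is compact, $\Psi$ is sequentially weakly continuous; and since $\Phi$ is coercive and sequentially weakly lower semicontinuous on the reflexive space $X$, every sublevel set $\{\Phi\le r\}$ is bounded, sequentially weakly closed, hence sequentially weakly compact. In particular $I_\lambda=\Phi-\lambda\Psi$ (with $\lambda>0$) is sequentially weakly lower semicontinuous. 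Moreover, the pairing ``$\Psi'$ compact and $(\Phi')^{-1}$ continuous'' is exactly what forces $I_\lambda$ to satisfy the Palais--Smale condition: if $u_n$ is bounded with $I'_\lambda(u_n)\to 0$, then along a weakly convergent subsequence $\Psi'(u_n)$ converges strongly, whence $\Phi'(u_n)=I'_\lambda(u_n)+\lambda\Psi'(u_n)$ converges strongly and $u_n=(\Phi')^{-1}(\Phi'(u_n))$ converges strongly by continuity of the inverse.

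Next I would turn the assumption $\beta(r_1,r_2)<\rho_1(r_1,r_2)$ together with $\lambda\in\Lambda$ into two quantitative inequalities. Writing $\varphi:=1/\lambda$, the choice of $\Lambda$ means $\beta(r_1,r_2)<\varphi<\rho_1(r_1,r_2)$. From $\varphi>\beta$ and the definition of the infimum there is $v_0\in\Omega$ with $\sup_\Omega\Psi-\Psi(v_0)<\varphi(r_2-\Phi(v_0))$, i.e. $I_\lambda(v_0)<r_2-\lambda\sup_\Omega\Psi$. From $\varphi<\rho_1$ and the definition of the supremum there is $v_1\in\Omega$ with $\Psi(v_1)-\overline\Psi_1>\varphi(\Phi(v_1)-r_1)$, i.e. $I_\lambda(v_1)<r_1-\lambda\,\overline\Psi_1$, where $\overline\Psi_1:=\sup_{\{\Phi\le r_1\}}\Psi$. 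Setting $\eta:=\inf_\Omega I_\lambda$, these yield $\eta\le I_\lambda(v_1)<r_1-\lambda\overline\Psi_1$ and $\eta\le I_\lambda(v_0)<r_2-\lambda\sup_\Omega\Psi$, so $\eta$ sits strictly below the two threshold values attached to the two sides of $\partial\Omega$.

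I would then analyse the boundary of $\Omega$. On $\{\Phi=r_1\}$ one has $\Psi\le\overline\Psi_1$, hence $I_\lambda\ge r_1-\lambda\overline\Psi_1>\eta$; and since the strong closure $\overline\Omega$ is contained in $\Phi^{-1}([r_1,r_2])$, every point of it with $\Phi=r_2$ is a strong limit of points of $\Omega$, so there $\Psi\le\sup_\Omega\Psi$ by continuity and $I_\lambda\ge r_2-\lambda\sup_\Omega\Psi>\eta$. Thus $\inf_{\overline\Omega}I_\lambda=\eta$ is strictly less than every value of $I_\lambda$ on $\partial\Omega$. Now I would apply Ekeland's variational principle on the complete metric space $\overline\Omega$ to produce a minimizing sequence $u_n$ with $I_\lambda(u_n)\to\eta$ which is also an approximate critical sequence. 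Because the boundary values all exceed $\eta$, the $u_n$ eventually lie in the open set $\Omega$, so the metric Ekeland estimate upgrades to $\|I'_\lambda(u_n)\|_{X^*}\to 0$. The Palais--Smale condition of the first paragraph then gives a strongly convergent subsequence $u_n\to u_{0,\lambda}$ with $I_\lambda(u_{0,\lambda})=\eta$ and $I'_\lambda(u_{0,\lambda})=0$.

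Finally I would confirm $u_{0,\lambda}\in\Omega$, which simultaneously certifies it as the asserted constrained minimizer. Strong convergence gives $\Phi(u_{0,\lambda})=\lim\Phi(u_n)\in[r_1,r_2]$; the endpoint $r_1$ is excluded because it would force $I_\lambda(u_{0,\lambda})\ge r_1-\lambda\overline\Psi_1>\eta$, and the endpoint $r_2$ is excluded because continuity gives $\Psi(u_{0,\lambda})\le\sup_\Omega\Psi$ and hence $I_\lambda(u_{0,\lambda})\ge r_2-\lambda\sup_\Omega\Psi>\eta$, both contradicting $I_\lambda(u_{0,\lambda})=\eta$. Therefore $r_1<\Phi(u_{0,\lambda})<r_2$, i.e. $u_{0,\lambda}\in\Phi^{-1}(]r_1,r_2[)$, and being the minimum over the open strip it satisfies $I_\lambda(u_{0,\lambda})\le I_\lambda(u)$ for all $u\in\Phi^{-1}(]r_1,r_2[)$. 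I expect the main obstacle to be exactly this interior-location step: a naive minimizing sequence for $\inf_\Omega I_\lambda$ need only converge weakly, and its weak limit can escape into the deep sublevel set $\{\Phi<r_1\}$, where $I_\lambda$ may be even smaller, so sequential weak lower semicontinuity alone does not trap the minimizer. It is precisely here that the compactness hypotheses on $\Phi'$ and $\Psi'$ are indispensable: via Ekeland's principle and the resulting Palais--Smale condition they force \emph{strong} convergence, and strong continuity of $\Phi$ then rules out the escape by pinning $\Phi(u_{0,\lambda})\ge r_1$.
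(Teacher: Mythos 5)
Your argument is essentially correct, but a preliminary remark is in order: the paper itself contains \emph{no} proof of this statement --- it is imported verbatim from Bonanno (\cite[Theorem 5.1, Proposition 2.1]{B}), with only the remark that it follows from the local minimum theorem \cite[Theorem 3.1]{B}. So the real comparison is with the cited source, and there your route --- minimize $I_\lambda$ over the strip $\Omega=\Phi^{-1}(]r_1,r_2[)$, convert $\beta(r_1,r_2)<1/\lambda<\rho_1(r_1,r_2)$ into the two strict level estimates $\eta<r_1-\lambda\sup_{\Phi^{-1}(]-\infty,r_1])}\Psi$ and $\eta<r_2-\lambda\sup_\Omega\Psi$, use the resulting uniform gap on $\partial\Omega$ to push the Ekeland points into the open set, and close via the Palais--Smale-type step in which compactness of $\Psi'$ plus continuity of $(\Phi')^{-1}$ upgrades weak to strong convergence --- is in substance the same Ekeland-based mechanism as Bonanno's own proof (his paper is titled ``A critical point theorem via the Ekeland variational principle,'' and your restricted Palais--Smale argument is what he axiomatizes as the $(PS)^{[r_1,r_2]}$ condition). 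Two points you leave tacit should be made explicit. First, Ekeland's principle on $\overline{\Omega}$ requires $I_\lambda$ to be bounded below there, i.e.\ $\sup_\Omega\Psi<\infty$; this does follow from your own first paragraph (the sublevel set $\{\Phi\le r_2\}$ is bounded and sequentially weakly compact, and $\Psi$ is sequentially weakly continuous), or more cheaply from the hypothesis itself, since $\beta(r_1,r_2)<\rho_1(r_1,r_2)$ forces $\beta<\infty$, which is only possible when $\sup_\Omega\Psi<\infty$; but it must be said. Second, you use $\lambda>0$ twice (for the weak lower semicontinuity of $I_\lambda$ and, more critically, to pass from $\Psi\le\sup\Psi$ to $-\lambda\Psi\ge-\lambda\sup\Psi$ in the boundary estimates); this is legitimate because each quotient defining $\beta$ is nonnegative, so $\beta\ge 0$, hence $\rho_1>\beta\ge 0$ and every $\lambda\in\Lambda$ satisfies $\lambda>1/\rho_1\ge 0$ --- again, a one-line justification worth inserting. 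With those two remarks added, your proof is complete and self-contained, which is more than the paper provides.
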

\begin{theorem} (\cite[Theorem 5.5]{B}) \label{t1-1}
Let $X$ be a real Banach space,  $ \Phi:X \to \mathbb{R}$
be a  continuously G\^ateaux differentiable functional whose G\^ateaux derivative admits a
 continuous inverse on $X^*$ and  $ \Psi:X \to \mathbb{R}$ be a continuously G\^ateaux differentiable functional whose G\^ateaux derivative
  is compact. Fix $\inf_X\Phi<r<\sup_X\Phi$ and assume that
$$\rho_2(r)>0,$$
and for each $\lambda>\frac{1}{\rho_2(r)}$, the functional
$I_\lambda:=\Phi-\lambda\Psi$ is coercive. Then, for each
$\lambda\in ]\frac{1}{\rho_2(r)},+\infty[$ there is
$u_{0,\lambda}\in \Phi^{-1} ]r,+\infty[$ such that
$I_\lambda(u_{0,\lambda})\leq I_\lambda(u)$ for all $u\in \Phi^{-1}
]r,+\infty[$ and $I'_\lambda(u_{0,\lambda})=0$.
\end{theorem}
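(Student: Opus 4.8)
The plan is to prove this as a constrained-minimization result built on Ekeland's variational principle, since the hypotheses deliberately drop the reflexivity of $X$ and the weak lower semicontinuity of $\Phi$ that would license a direct-method argument. Write $\Omega:=\Phi^{-1}(]r,+\infty[)$, an open subset of $X$ because $\Phi$ is continuous, and set $\sigma:=\sup_{u\in\Phi^{-1}(]-\infty,r])}\Psi(u)$. First I would record two consequences of the hypothesis $\lambda>1/\rho_2(r)$, i.e. $1/\lambda<\rho_2(r)$. On the one hand, the positivity of $\rho_2(r)$ forces $\sigma<+\infty$ (were $\sigma=+\infty$, every quotient defining $\rho_2(r)$ would equal $-\infty$, so $\rho_2(r)\le0$). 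On the other hand, by the definition of $\rho_2(r)$ as a supremum there exists $\bar v\in\Omega$ with $\bigl(\Psi(\bar v)-\sigma\bigr)/\bigl(\Phi(\bar v)-r\bigr)>1/\lambda$, which rearranges to $I_\lambda(\bar v)<r-\lambda\sigma$.

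Next I would set up the minimization. Let $K:=\Phi^{-1}([r,+\infty))$, a closed, hence complete, subset of $X$ containing $\Omega$, and put $\mu:=\inf_{K}I_\lambda$. Coercivity of $I_\lambda$ (valid since $\lambda>1/\rho_2(r)$) gives $\mu>-\infty$ and forces every minimizing sequence to be bounded. The boundary-avoidance step is the crux: for $u\in\Phi^{-1}(\{r\})$ one has $\Psi(u)\le\sigma$, so $I_\lambda(u)=r-\lambda\Psi(u)\ge r-\lambda\sigma>\mu$, the last inequality coming from $\mu\le I_\lambda(\bar v)<r-\lambda\sigma$. Choosing $\delta>0$ with $\mu+\delta<r-\lambda\sigma$, any point $u\in K$ with $I_\lambda(u)<\mu+\delta$ must satisfy $\Phi(u)\ne r$, hence $\Phi(u)>r$, i.e. $u\in\Omega$.

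Now I would apply Ekeland's variational principle to $I_\lambda|_K$ (continuous, hence lower semicontinuous, and bounded below): for each $n$ it yields $u_n\in K$ with $I_\lambda(u_n)\le\mu+1/n$ and $I_\lambda(u_n)\le I_\lambda(w)+\tfrac1n\|w-u_n\|$ for all $w\in K$. By the previous paragraph $u_n\in\Omega$ for $n$ large, and since $\Omega$ is open the Ekeland inequality holds for all $w$ near $u_n$; testing with $w=u_n+th$ and letting $t\to0^+$ converts it into the estimate $\|I_\lambda'(u_n)\|_{X^*}\le1/n\to0$. Thus $\{u_n\}$ is a bounded Palais--Smale sequence. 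Here the two structural hypotheses enter to supply compactness: from $I_\lambda'(u_n)=\Phi'(u_n)-\lambda\Psi'(u_n)\to0$ and the compactness of $\Psi'$, a subsequence has $\Psi'(u_n)\to\psi_0$ in $X^*$, whence $\Phi'(u_n)\to\lambda\psi_0$; the continuity of $(\Phi')^{-1}$ then upgrades this to $u_n\to u_0:=(\Phi')^{-1}(\lambda\psi_0)$ in $X$.

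Finally I would collect the conclusions. By continuity $I_\lambda(u_0)=\mu$ and, since $I_\lambda'$ is continuous, $I_\lambda'(u_0)=0$. Because $I_\lambda(u_0)=\mu<r-\lambda\sigma$, the boundary estimate rules out $\Phi(u_0)=r$, so $\Phi(u_0)>r$ and $u_0\in\Omega=\Phi^{-1}(]r,+\infty[)$. As $u_0$ minimizes $I_\lambda$ over $K\supseteq\Omega$, it minimizes $I_\lambda$ over $\Phi^{-1}(]r,+\infty[)$ and is a critical point, as required. I expect the main obstacle to be the boundary-avoidance argument: one must exploit the quantitative gap produced by $\lambda>1/\rho_2(r)$ to guarantee the Ekeland sequence stays strictly inside $\Omega$, for only there does approximate minimality translate into the vanishing of the derivative --- this is precisely where dropping weak compactness is compensated by the hypotheses that $\Phi'$ has a continuous inverse and $\Psi'$ is compact.
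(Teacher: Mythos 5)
A preliminary remark: the paper itself contains no proof of this statement; it is quoted from \cite[Theorem 5.5]{B}, with only the remark that it is a consequence of \cite[Theorem 4.2]{B}, whose proof rests on Ekeland's variational principle. Your reconstruction follows exactly the route of that source: Ekeland's principle on $K=\Phi^{-1}([r,+\infty[)$, boundary avoidance via the strict inequality $I_\lambda(\bar v)<r-\lambda\sigma$ extracted from $\lambda>1/\rho_2(r)$, and Palais--Smale-type compactness obtained from coercivity (boundedness of the sequence), compactness of $\Psi'$, and continuity of $(\Phi')^{-1}$. Those steps --- the finiteness of $\sigma$, the construction of $\bar v$, the passage from the Ekeland inequality to $\|I_\lambda'(u_n)\|_{X^*}\le 1/n$ (legitimate because $u_n$ eventually lies in the open set $\Omega$), the extraction of the convergent subsequence, and the final identification of $u_0$ as a minimizer over $\Phi^{-1}(]r,+\infty[)$ and a critical point --- are all carried out correctly.

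There is, however, one genuine gap: the claim that coercivity of $I_\lambda$ gives $\mu:=\inf_K I_\lambda>-\infty$. Coercivity only says that sublevel sets are bounded; in an infinite-dimensional Banach space a smooth coercive functional can be unbounded below on a bounded set, so boundedness below is not automatic. For instance, on $\ell^2$ with orthonormal basis $(e_n)$, the functional $u\mapsto\|u\|^2-\sum_{n}(n+1)\,\theta(4\|u-e_n\|)$, where $\theta$ is a smooth bump with $\theta(0)=1$ and support in $[0,1]$, is smooth (the balls $B(e_n,1/4)$ are disjoint and uniformly separated), coercive, yet tends to $-\infty$ along $(e_n)$. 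Since Ekeland's principle cannot even be invoked without a finite infimum, this step requires a real argument and cannot come from coercivity alone. The hypotheses do give that $\Psi$ is bounded on bounded sets (a compact $\Psi'$ is bounded on bounded sets, hence $\Psi$ is Lipschitz there), so what is actually needed is that $\Phi$ --- equivalently $I_\lambda$ --- is bounded below on the bounded set $K\cap\{u: I_\lambda(u)\le I_\lambda(\bar v)\}$, and this does not follow in any obvious way from the assumption that $\Phi'$ admits a continuous inverse. This is precisely the point where the intermediate result \cite[Theorem 4.2]{B}, with its Palais--Smale-type hypotheses, does the work in the original derivation; your proof needs either to establish this boundedness from the stated structural hypotheses or to invoke such a lemma explicitly. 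Everything else in your argument is sound.
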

Theorems \ref{t1} is  a consequence of a local minimum theorem
\cite[Theorem 3.1]{B} which is a more general version of the Ricceri
Variational Principle (see \cite{R}) and Theorem \ref{t1-1} is a
consequence of \cite[Theorem 4.2]{B} which is a relevant variant of
\cite[Theorem 3.1]{B}.

  Let  $T\geq2$ be a fixed positive integer, $[1,T]$ denote a discrete interval $\{1,...,T\}$.
Define $T$-dimensional function space by
$$W:=\{u : [0,T+ 1]\to \mathbb{R} : u(0) = u(T+1)=0\},$$
 which is a Hilbert space under the norm
$$\|u\|=\left\{\sum^{T+1}_ {k=1}w(k-1) |\Delta u(k-1)|^{p^-} +q(k)|u(k)|^{p^-}\right\}^{1/p^-}.$$
Since $W$ is finite-dimensional, we can also define the following equivalent norm on $W$
$$\|u\|_+=\left\{\sum^{T+1}_ {k=1} w(k-1)|\Delta u(k-1)|^{p^+} +q(k)|u(k)|^{p^+}\right\}^{1/p^+}.$$
It is clear that by weighted H\"{o}lder inequality, one can conclude
\begin{equation}\label{in}
     K_0\|u\|\leq \|u\|_+\leq 2^{\frac{p^+-p^-}{p^+p^-}}K_0\|u\|,
\end{equation}
where,
$$K_0=
\Big\{(2T+2)\max\{w^+,q^+\}\Big\}^{\frac{p^--p^+}{p^+p^-}}.$$
 Now, let $\varphi: W \to \mathbb{R} $ be given by the formula
 \begin{equation}\label{1-3}
    \varphi(u):=\sum^{T+1}_ {k=1}\Big[ w(k-1)|\Delta u(k-1)|^{p(k-1)} +q(k)|u(k)|^{p(k)}\Big].
 \end{equation}
In the rest of the paper, the following lemma will be very useful.
\begin{lemma}\label{L1}
For any $u\in W$, there exists a positive constant $C_1$ such that
 \begin{equation}\label{x1}
    \|u\|<1\Rightarrow \|u\|_+^{p^+}\leq
    \varphi(u)\leq\|u\|^{p^-},
     \end{equation}
\begin{equation}\label{x11}
    \|u\|\geq1\Rightarrow \|u\|^{p^-}-C_1\leq \varphi(u)\leq \|u\|_+^{p^+}+C_1.
 \end{equation}
   where $C_1=(T+1)(w^++q^+)$.
 \end{lemma}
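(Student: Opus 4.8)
The plan is to reduce everything to two elementary scalar inequalities comparing the variable power $a^{p(\cdot)}$ with the frozen-exponent powers $a^{p^-}$ and $a^{p^+}$, and then sum these termwise against the weights $w(k-1)$ and $q(k)$. The two facts I would isolate first are: for every $a\geq 0$ and every exponent $s$ with $p^-\leq s\leq p^+$,
\begin{equation*}
a^{s}\leq a^{p^+}+1 \qquad\text{and}\qquad a^{p^-}\leq a^{s}+1 .
\end{equation*}
Both follow from a one-line case split on whether $a\geq 1$ or $a<1$: if $a\geq 1$ the power $a^{t}$ is increasing in the exponent $t$, so $a^{p^-}\leq a^{s}\leq a^{p^+}$; if $a<1$ the monotonicity reverses but every power lies in $[0,1]$, so the additive $+1$ absorbs the discrepancy.

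For the first display, where $\|u\|<1$, I would first observe that $\|u\|<1$ forces the whole weighted $p^-$-sum to be $<1$; since every summand is nonnegative and both weights satisfy $w(k-1)\geq 1$ and $q(k)\geq 1$, this forces $|\Delta u(k-1)|<1$ and $|u(k)|<1$ for each $k$. On this regime the monotonicity is genuine: for a base $a<1$ and $p^-\leq s\leq p^+$ one has $a^{p^+}\leq a^{s}\leq a^{p^-}$. Applying this with $s=p(k-1)$ and $s=p(k)$ termwise, multiplying by the positive weights and summing over $k$ yields exactly
\begin{equation*}
\|u\|_{+}^{p^+}\leq\varphi(u)\leq\|u\|^{p^-},
\end{equation*}
since the left- and right-hand sums are by definition $\|u\|_{+}^{p^+}$ and $\|u\|^{p^-}$.

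The genuinely delicate case is $\|u\|\geq 1$, and this is the step I expect to be the main obstacle: here the norm bound tells me nothing about the individual bases, some of which may exceed $1$ while others are smaller, so the clean termwise monotonicity of the first case is unavailable. This is precisely where the two uniform inequalities above do the work. For the upper bound I would apply $a^{p(\cdot)}\leq a^{p^+}+1$ to each base $a=|\Delta u(k-1)|$ and $a=|u(k)|$, multiply by $w(k-1)$ and $q(k)$ and sum, obtaining $\varphi(u)\leq\|u\|_{+}^{p^+}+\sum_{k=1}^{T+1}\bigl(w(k-1)+q(k)\bigr)$. Symmetrically, $a^{p^-}\leq a^{p(\cdot)}+1$ gives $\varphi(u)\geq\|u\|^{p^-}-\sum_{k=1}^{T+1}\bigl(w(k-1)+q(k)\bigr)$.

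It then remains only to bound the error sum by the stated constant: there are $T+1$ indices, and each weight is controlled by its maximum, so $\sum_{k=1}^{T+1}\bigl(w(k-1)+q(k)\bigr)\leq (T+1)(w^{+}+q^{+})=C_1$. Substituting this into the two bounds of the previous paragraph produces $\|u\|^{p^-}-C_1\leq\varphi(u)\leq\|u\|_{+}^{p^+}+C_1$, completing the second display. The whole argument is thus elementary once the correct scalar inequalities are identified; the only real care needed is tracking which direction of monotonicity holds in which regime and not assuming more about the individual bases than the norm actually provides.
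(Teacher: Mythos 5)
Your proof is correct and is essentially the paper's argument: both rest on the same case split of each base $|\Delta u(k-1)|$, $|u(k)|$ against $1$, using exponent monotonicity in each regime and absorbing an error of at most $1$ per term across the $T+1$ indices to get $C_1=(T+1)(w^++q^+)$. The only difference is organizational — the paper decomposes the sums over the index sets $A^<$ and $A^\geq$ and bounds the discrepancy sum $\sum_{k\in A^<}\bigl(|\Delta u(k-1)|^{p^-}-|\Delta u(k-1)|^{p^+}\bigr)$, whereas you package the identical estimate as the pointwise inequalities $a^{s}\leq a^{p^+}+1$ and $a^{p^-}\leq a^{s}+1$ applied termwise.
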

\begin{proof}
Let $u\in W$ be  fixed.
 By a similar approach argued in \cite{MRT2}, we
set
$$A^<:=\{k\in [0,T+1], |\Delta u(k)|<1\}, \quad B^<:=\{k\in [0,T+1], |u(k)|<1\},$$
$$A^\geq:=\{k\in [0,T+1], |\Delta u(k)|\geq1\}, \quad B^\geq:=\{k\in [0,T+1], |u(k)|\geq1\}.$$
 If $\|u\|<1$, then $A^\geq=B^\geq=\emptyset$ and $A^>=B^>=[0,T+1]$. It follows that $|\Delta u(k-1)|,
|u(k)|<1$ for each $k\in[1,T+1]$. Hence, we have
\begin{eqnarray*}
 \varphi(u)\leq \sum^{T+1}_ {k=1}\Big[w(k-1) |\Delta u(k-1)|^{p^-} +q(k)|u(k)|^{p^-}\Big]= \|u\|^{p^-}.
\end{eqnarray*}
\begin{eqnarray*}
\|u\|_+^{p^+ }=  \sum^{T+1}_ {k=1}\Big[w(k-1) |\Delta u(k-1)|^{p^+}
+q(k)|u(k)|^{p^+}\Big]\leq \varphi(u),
\end{eqnarray*}
which means that \eqref{x1} holds.  If $\|u\|\geq1$, we have
\begin{eqnarray*}
 &&\sum^{T+1}_ {k=1}\Big[ \frac{w(k-1)}{{p(k-1)}}|\Delta u(k-1)|^{p(k-1)}\Big]  \\
   &=& \Big[\Big( \sum_ {k\in A^<}+\sum_ {k\in A^\geq}\Big)w(k-1)|\Delta u(k-1)|^{p(k-1)}\Big]\\
&\leq& \Big[  \sum_{k\in A^<} w(k-1)|\Delta u(k-1)|^{p^-}+\sum_{k\in A^\geq} w(k-1)|\Delta u(k-1)|^{p^+}\Big]\\
   &=& \Big[\sum^{T+1}_ {k=1}w(k-1)|\Delta u(k-1)|^{{p^+}}\\
   &+& \sum_ {k\in A^<} \Big(w(k-1)(|\Delta u(k-1)|^{{p^-}}-|\Delta u(k-1)|^{p^+})\Big)\Big]\\
   &\leq&\Big[\sum^{T+1}_ {k=1}w(k-1)|\Delta u(k-1)|^{{p^+}}\\
   &+& w^+ \sum_ {k\in A^<} \Big(|\Delta u(k-1)|^{{p^-}}-|\Delta u(k-1)|^{p^+}\Big)\Big]\\
   &\leq& \Big[\sum^{T+1}_ {k=1}w(k-1)|\Delta u(k-1)|^{{p^+}}+ (T+1)w^+\Big].
\end{eqnarray*}
By a similar argument, it reads
\begin{eqnarray*}
 &&\sum^{T+1}_ {k=1}\Big[ \frac{w(k-1)}{{p(k-1)}}|\Delta u(k-1)|^{p(k-1)}\Big]  \\
   &=& \Big[\Big( \sum_ {k\in A^<}+\sum_ {k\in A^\geq}\Big)w(k-1)|\Delta u(k-1)|^{p(k-1)}\Big]\\
&\geq& \Big[  \sum_{k\in A^<} w(k-1)|\Delta u(k-1)|^{p^+}+\sum_{k\in A^\geq} w(k-1)|\Delta u(k-1)|^{p^-}\Big]\\
   &=& \Big[\sum^{T+1}_ {k=1}w(k-1)|\Delta u(k-1)|^{{p^-}}\\
   &-& \sum_ {k\in A^<} \Big(w(k-1)(|\Delta u(k-1)|^{{p^-}}-|\Delta u(k-1)|^{p^+})\Big)\Big]\\
   &\geq&\Big[\sum^{T+1}_ {k=1}w(k-1)|\Delta u(k-1)|^{{p^-}}\\
   &-& w^+ \sum_ {k\in A^<} \Big(|\Delta u(k-1)|^{{p^-}}-|\Delta u(k-1)|^{p^+}\Big)\Big]\\
   &\geq& \Big[\sum^{T+1}_ {k=1}w(k-1)|\Delta u(k-1)|^{{p^-}}- w^+(T+1)\Big].
\end{eqnarray*}
Combining the above inequalities we obtain
\begin{eqnarray*}
\Big[\sum^{T+1}_ {k=1}w(k-1)|\Delta u(k-1)|^{{p^-}}- w^+(T+1)\Big] &\leq&\sum^{T+1}_ {k=1}\Big[ \frac{w(k-1)}{{p(k-1)}}|\Delta u(k-1)|^{p(k-1)}\Big] \\
   &\leq& \Big[\sum^{T+1}_ {k=1}w(k-1)|\Delta u(k-1)|^{{p^+}}+ w^+(T+1)\Big].
\end{eqnarray*}
In the same manner we get
\begin{eqnarray*}
 \Big[\sum^{T+1}_ {k=1}q(k)|u(k)|^{p^-}-q^+(T+1)\Big]&\leq&\sum^{T+1}_ {k=1}\Big[\frac{q(k)}{{p(k)}}|u(k)|^{p(k)}\Big]  \\
   &\leq& \Big[\sum^{T+1}_ {k=1}q(k)|u(k)|^{p^+}+q^+(T+1)\Big]
\end{eqnarray*}
Combining the above double inequalities we obtain
\begin{eqnarray*}
 \|u\|^{p^-}-(T+1)(w^++q^+)\leq \varphi(u)\leq
\|u\|_+^{p^+}+(T+1)(w^++q^+).
\end{eqnarray*}

\end{proof}

Let $\Phi$ and $\Psi$ be as in the following
 \begin{eqnarray}\label{2-1}
  \Phi(u)&:=& \sum^{T+1}_ {k=1}\left[ \frac{w(k-1)}{{p(k-1)}}|\Delta u(k-1)|^{{p}(k-1)} +\frac{q(k)}{{p(k)}}|u(k)|^{p(k)}\right], \\
 \Psi(u) &:=& \sum_{k=1}^{T}F(k,u(k)),\label{2-2}
\end{eqnarray}
 where $F(k,t):=\int^t_ 0 f(k,\xi)d\xi$ for every
$(k,t)\in [1,T]\times \mathbb{R}$.\\
 In the sequel, we will use the following inequality
\begin{equation}\label{4}
    \|u\|_\infty:=\max_{ k\in[1,T]}|u(k)|\leq (2T+2)^{\frac{p^--1}{p^-}}\|u\|, \ \ \forall u\in W,
\end{equation}

To study the problem \eqref{1}, we consider the functional $I_{\lambda,\mu}:W\to\mathbb{R}$ defined by
\begin{eqnarray}\label{I}
 I_{\lambda}(u)&=& \sum^{T+1}_ {k=1}\left[ \frac{w(k-1)}{{p(k-1)}}|\Delta u(k-1)|^{p(k-1)} +\frac{q(k)}{{p(k)}}|u(k)|^{p(k)}\right]\nonumber\\
&-&\lambda
\sum_{k=1}^{T}F(k,u).
\end{eqnarray}
We want to remark that since problem $(1.1) $ is settled in a finite-dimensional Hilbert space $W$, it is not difficult to verify that the
functional $I_{\lambda }$ satisfies the regularity properties. Therefore $I_{\lambda }$ is of class $C^{1}$ on $W$ (see, e.g., \cite{JZ}) with the derivative
 \begin{eqnarray*}
 I'_{\lambda}(u)(v)&=& \sum_{k=1}^{T+1}\left[w(k-1)|\Delta u(k-1)|^{p(k-1)-2}\Delta u(k-1)\Delta v(k-1)\right]\\
 &+&\sum_{k=1}^{T}q(k)|u(k)|^{p(k)-2}u(k)v(k)\\
&-&\sum_{k=1}^{T}\left[\lambda f(k,u(k))\right]v(k),
 \end{eqnarray*}
 for all $u,v\in W$.
\begin{lemma}\label{l}
 The critical points of
$I_{\lambda}$ and the solutions of the problem \eqref{1} are exactly
equal.
\end{lemma}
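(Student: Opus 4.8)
The plan is to prove the two solution sets coincide by establishing a single \emph{termwise equivalence}: the condition $I'_\lambda(u)(v)=0$ for all $v\in W$ is, after one discrete summation by parts, identical to requiring that the left-hand side of the difference equation in \eqref{1} vanish at each interior node. Because the identity is an equivalence, one computation handles both inclusions simultaneously, the Dirichlet conditions $v(0)=v(T+1)=0$ being exactly what is needed to kill the boundary contributions in Abel's formula.

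First I would recall that $I_\lambda\in C^1(W)$, so $u\in W$ is a critical point precisely when $I'_\lambda(u)(v)=0$ for every $v\in W$, with $I'_\lambda(u)(v)$ given by the explicit three-sum expression displayed above. The only term that must be reshaped is the first sum. Writing $\phi(k-1):=w(k-1)|\Delta u(k-1)|^{p(k-1)-2}\Delta u(k-1)$ and using the discrete product rule $\Delta(\phi v)(j)=\phi(j)\Delta v(j)+v(j+1)\Delta\phi(j)$, I would telescope the sum. Since any $v\in W$ satisfies $v(0)=v(T+1)=0$, the telescoped boundary contribution (a combination of $v(0)$ and $v(T+1)$) vanishes, and after reindexing one is left with
$$\sum_{k=1}^{T+1} w(k-1)|\Delta u(k-1)|^{p(k-1)-2}\Delta u(k-1)\,\Delta v(k-1)=-\sum_{k=1}^{T}\Delta\bigl(w(k-1)|\Delta u(k-1)|^{p(k-1)-2}\Delta u(k-1)\bigr)\,v(k),$$
where I must also invoke $v(T+1)=0$ to drop the would-be $k=T+1$ term after the index shift.

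Substituting this back and collecting the three sums over the common range $k=1,\dots,T$, I would rewrite the derivative as a single interior sum,
$$I'_\lambda(u)(v)=\sum_{k=1}^{T}\Bigl[-\Delta\bigl(w(k-1)|\Delta u(k-1)|^{p(k-1)-2}\Delta u(k-1)\bigr)+q(k)|u(k)|^{p(k)-2}u(k)-\lambda f(k,u(k))\Bigr]v(k).$$
The equivalence is then immediate. If $u$ solves \eqref{1}, each bracket is zero, hence $I'_\lambda(u)(v)=0$ for all $v$ and $u$ is critical. Conversely, if $u$ is critical, I would test against the coordinate functions $e_j\in W$ ($j\in[1,T]$), equal to $1$ at node $j$ and $0$ elsewhere, which are admissible precisely because they respect the boundary conditions; as the values $v(1),\dots,v(T)$ are free, every bracket must vanish, giving the difference equation at each $k\in[1,T]$, while $u(0)=u(T+1)=0$ is automatic from $u\in W$.

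The computation is routine, and the one point that demands care — the part I would treat as the main obstacle — is the index bookkeeping in the summation by parts: arranging the shifts so that exactly $v(0)$ and $v(T+1)$ appear in the boundary term (where they are forced to be zero) and so that the remaining sum runs over $1\le k\le T$, with no stray endpoint term surviving. Once that is verified, the identity reads the same in both directions and the two sets of functions coincide, as claimed.
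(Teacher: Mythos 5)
Your proposal is correct and follows essentially the same route as the paper's own proof: discrete summation by parts (using $v(0)=v(T+1)=0$ to kill the boundary terms) to rewrite $I'_\lambda(u)(v)$ as a single interior sum, then deducing the pointwise equation from the arbitrariness of $v$, with the converse obtained by reading the same identity backwards. Your explicit use of the coordinate functions $e_j$ is just a concrete justification of the paper's ``since $v\in W$ is arbitrary'' step, not a different argument.
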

\begin{proof}
Let $\overline{u}$ be a critical point of $I_{\lambda}$ in $W$.
Thus, for every $v\in W$, taking $v(0)=v(T+1)=0$ into account and
applying summation by parts, one has
\begin{eqnarray*}
0&=&I'_{\lambda}(\overline{u})(v) \\
&=&-\sum_{k=1}^{T}\Big[\Delta(w(k-1)|\Delta
\overline{u}(k-1)|^{p(k-1)-2}\Delta
\overline{u}(k-1)) \\
&-& q(k)|\overline{u}(k)|^{p(k)-2}\overline{u}(k)+\lambda f(k,\overline{u}(k))\Big]v(k).
\end{eqnarray*}
Since $v\in W$ is arbitrary, it reads
  $$-\Delta(w(k-1)|\Delta
\overline{u}(k-1)|^{p(k-1)-2}\Delta
\overline{u}(k-1)
+q(k)|
\overline{u}(k)|^{p(k)-2}
\overline{u}(k)=\lambda f(k,\overline{u}(k)),$$
for every $k \in [1,T]$. Therefore, $\overline{u}$ is a solution of \eqref{1}.
 So by  bearing in mind that $\overline{u}$ is arbitrary, we conclude that every  critical point of
the functional $I_{\lambda}$ in $W$, is  exactly a solution of the
problem \eqref{1}.\\
Vice versa, if $\overline{u}\in W$ be a solution of problem
\eqref{1}, by multiplying the difference equation in problem
\eqref{1} by $v(k)$ as an arbitrary element of $W$ and summing and
using the fact that
$$\sum_{k=1}^{T+1}w(k-1)\phi_{p(k-1)}(\Delta
\overline{u}(k-1))\Delta v(k-1)=-\sum_{k=1}^{T}\Delta \left(
w(k-1)\phi_{p(k-1)}(\Delta \overline{u}(k-1))\right)v(k),$$ we have
$I'_{\lambda,\mu}(\overline{u})(v)=0$, hence $\overline{u}$ is a
critical point for $I_{\lambda,\mu}$. Thus the viceversa holds and
the proof is completed.
\end{proof}


\section{Main Results}
First, put
$$A=\left(w(0)+w(T)+\sum_{k=1}^{T}q(k)\right)$$
and
$$K=(2T+2)^{\frac{1-p^+}{p^+}}\Big\{\max\{w^+,q^+\}\Big\}^{\frac{p^--p^+}{p^+p^-}}.$$
Moreover, for given two non-negative constants $c$ and $d$ with $\frac{1}{p^+}\left({cK}\right)^{p^+}\neq
\frac{d^{p^-}}{p^-}A$, define
$$a_d(c):=\frac{\sum^T _{k=1} \max_{ |\xi|\leq c} F(k,\xi)-\sum^T_{k=1} F(k,d)}{\frac{1}{p^+}\left({cK}\right)^{p^+}-\frac{d^{p^-}}{p^-}A}.$$

\begin{itemize}
\item[$(F1)$] There exist a constant $c_{0}>0$ and a function $\alpha:\mathbb{Z}[1,T]\rightarrow\lbrack 2,+\infty )$,
  with $\max_{k\in[1,T]}\alpha(k):=\alpha^{+}<p^{-}$, such that for all $(k,t)\in \mathbb{Z}[1,T]\times\mathbb{R}$,
\begin{equation*}
F(k,t)\leq c_{0}(1+|t|^{\alpha(k)})  .
\end{equation*}

\end{itemize}
 Now, we are ready to state our first main result as follows.
\begin{theorem}\label{t2}
Assume the condition $(F1)$ holds and assume that there exist a
non-negative constant $c_1$ and two positive constants $c_2$ and $d$
such that
%
\begin{equation}\label{in2}
    \left\{\frac{K}{A^{\frac{1}{{p^+}}}}\right\}c_1<d<
    \left(\frac{p^-K^{p^+}}{p^+A}\right)^{\frac{1}{p^-}}c_2^{\frac{p^+}{p^-}}<\left(\frac{p^-}{p^+A}\right)^{\frac{1}{p^-}},
\end{equation}
and  \\\\
$(F2)$ $\quad a_d(c_2)<a_d(c_1).$\\\\
Then for any $\lambda\in
]\frac{1}{a_d(c_1)},\frac{1}{a_d(c_2)}[$ the problem \eqref{1}
has at least one non-trivial solution $u_0\in W$.
\end{theorem}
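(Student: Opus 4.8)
The plan is to apply Theorem \ref{t1} on $X=W$ with $\Phi$ and $\Psi$ as defined in \eqref{2-1}--\eqref{2-2}, so that $I_\lambda=\Phi-\lambda\Psi$ and, by Lemma \ref{l}, every critical point of $I_\lambda$ is a solution of \eqref{1}. First I would dispose of the regularity hypotheses, which are essentially automatic because $W$ is finite-dimensional: $\Phi$ is continuous (hence sequentially weakly lower semicontinuous) and of class $C^1$, its derivative is a strictly monotone coercive operator and therefore a homeomorphism of $W$ onto $W^*$ with continuous inverse, while $\Psi$ is of class $C^1$ with compact derivative (every operator on a finite-dimensional space is compact). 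Coercivity of $\Phi$ follows from the lower estimate in Lemma \ref{L1}: for $\|u\|\geq 1$ one has $\Phi(u)\geq \frac{1}{p^+}\varphi(u)\geq \frac{1}{p^+}(\|u\|^{p^-}-C_1)\to+\infty$.

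The heart of the argument is the choice of levels and of a test function. I would set $r_1=\frac{1}{p^+}(c_1K)^{p^+}$ and $r_2=\frac{1}{p^+}(c_2K)^{p^+}$, and take $u_d\in W$ to be the constant state $u_d(k)=d$ for $k\in[1,T]$, with $u_d(0)=u_d(T+1)=0$. The third inequality in \eqref{in2} forces $0<d<1$, and a direct computation (with $\Delta u_d$ supported only at the two endpoints) then gives $\frac{A d^{p^+}}{p^+}\leq \Phi(u_d)\leq \frac{A d^{p^-}}{p^-}$. The first inequality in \eqref{in2} rearranges to $r_1<\frac{Ad^{p^+}}{p^+}\leq\Phi(u_d)$, and the second to $\Phi(u_d)\leq\frac{Ad^{p^-}}{p^-}<r_2$, so that $u_d\in\Phi^{-1}(]r_1,r_2[)$.

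The crucial link between the sublevel sets of $\Phi$ and the sup-norm is the implication $\Phi(u)<r\Rightarrow\|u\|_\infty<(p^+r)^{1/p^+}/K$. I would prove it by combining the lower bound $\Phi(u)\geq\frac{1}{p^+}\|u\|_+^{p^+}$ from Lemma \ref{L1}, the norm equivalence \eqref{in}, and the sup-norm estimate \eqref{4}; the weights collapse thanks to the algebraic identity $K=K_0(2T+2)^{-(p^--1)/p^-}$ hidden in the definitions of $K$ and $K_0$. In particular $\Phi(u)<r_2$ forces $\|u\|_\infty<c_2$ and $\Phi(u)\leq r_1$ forces $\|u\|_\infty\leq c_1$, whence $\sup_{u\in\Phi^{-1}(]r_1,r_2[)}\Psi(u)\leq\sum_{k=1}^T\max_{|\xi|\leq c_2}F(k,\xi)$ and $\sup_{u\in\Phi^{-1}(]-\infty,r_1])}\Psi(u)\leq\sum_{k=1}^T\max_{|\xi|\leq c_1}F(k,\xi)$.

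With these ingredients I would bound the two Bonanno quantities by using $v=u_d$ as a competitor in \eqref{2} and \eqref{3}. Estimating the numerators through the two sup-norm inequalities above and the denominators $r_2-\Phi(u_d)$ and $\Phi(u_d)-r_1$ by means of $\frac{Ad^{p^-}}{p^-}$ yields $\beta(r_1,r_2)\leq a_d(c_2)$ and $\rho_1(r_1,r_2)\geq a_d(c_1)$. Since $c_2>d$ is forced (as $A^{1/p^+}/K\geq 1$), the numerator of $a_d(c_2)$ is non-negative, so $a_d(c_2)\geq 0$, and hypothesis $(F2)$ then gives $\beta(r_1,r_2)\leq a_d(c_2)<a_d(c_1)\leq\rho_1(r_1,r_2)$. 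Theorem \ref{t1} therefore applies and produces, for every $\lambda\in]\,1/\rho_1,1/\beta\,[\ \supseteq\ ]\,1/a_d(c_1),1/a_d(c_2)\,[$, a critical point $u_0\in\Phi^{-1}(]r_1,r_2[)$ of $I_\lambda$; since $\Phi(u_0)>r_1\geq 0=\Phi(0)$ the point $u_0$ is non-trivial, and by Lemma \ref{l} it solves \eqref{1}. The main obstacle I anticipate is the sign and monotonicity bookkeeping in this last step: because $0<d<1$ and $p^-\leq p^+$, the denominator of $a_d(c_1)$ is negative while that of $a_d(c_2)$ is positive, so the directions of the inequalities when passing from $r_2-\Phi(u_d)$ and $\Phi(u_d)-r_1$ to the explicit quantities in $a_d$ must be tracked carefully, and it is precisely \eqref{in2} that guarantees each inequality points the right way.
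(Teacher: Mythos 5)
Your proposal is correct and takes essentially the same approach as the paper's own proof: the same functionals $\Phi,\Psi$, the same levels $r_1=\frac{1}{p^+}(c_1K)^{p^+}$ and $r_2=\frac{1}{p^+}(c_2K)^{p^+}$, the same constant test function $\bar v\equiv d$ with the bracketing $\frac{A d^{p^+}}{p^+}\leq\Phi(\bar v)\leq\frac{A d^{p^-}}{p^-}$, the same sup-norm bound on the sublevel sets of $\Phi$, and the same estimates $\beta(r_1,r_2)\leq a_d(c_2)<a_d(c_1)\leq\rho_1(r_1,r_2)$ before invoking Theorem \ref{t1}. The only caveat (shared with the paper) is that the lower bound $\Phi(u)\geq\frac{1}{p^+}\|u\|_+^{p^+}$ from Lemma \ref{L1} is stated under the hypothesis $\|u\|<1$, so to make the sublevel-set step airtight one should argue via $\varphi(u)\leq p^+\Phi(u)<p^+r_i<1$, which (since $w,q\geq1$) gives the pointwise bounds $|\Delta u(k-1)|,|u(k)|<1$ and hence $\|u\|_+^{p^+}\leq\varphi(u)$ directly.
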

\begin{proof}
Our aim is to apply Theorem \ref{t1} to problem \eqref{1}. To settle
the variational framework of problem \eqref{1}, take $X=W$, and put
$\Phi,\ \Psi$  as defined in \eqref{2-1} and \eqref{2-2},
respectively for every $u\in W$. Due to $(F2)$, the interval
$]\frac{1}{a_d(c_1)},\frac{1}{a_d(c_2)}[$ is non-empty. Therefore,
if we fix $\bar{\lambda}$ in this interval, we can write
$$I_{\bar{\lambda}}=\Phi-\bar{\lambda} \Psi.$$
Again, because $W$ is finite dimensional, an easy computation ensures that $\Phi$ and  $\Psi$ are of class $C^1$ on $W$ with the derivatives
\begin{eqnarray*}
  \Phi'(u)(v) &=&\sum_{k=1}^{T+1}w(k-1)|\Delta{u}(k-1)|^{p(k-1)-2}\Delta{u}(k-1)\Delta v(k-1) \\
   &+& \sum_{k=1}^{T}q(k)|u(k)|^{p(k)-2}u(k)v(k) \\
   &=&-\sum_{k=1}^{T}\Delta(w(k-1)|\Delta{u}(k-1)|^{p(k-1)-2}\Delta{u}(k-1))v(k)\\
   &+&\sum_{k=1}^{T}q(k)|u(k)|^{p(k)-2}u(k)v(k),
\end{eqnarray*}
and
$$\Psi '(u)(v)=\sum_{k=1}^{T}f(k,u(k))v(k),
$$
for all $u,v\in W$.  Hence $\Phi$ is sequentially weakly
semicontinuous functional. Also $\Phi$ is coercive. Indeed,  let
$u\in W$ be a fixed member with $\|u\|>1$.
   From \eqref{x11}, we have
   $$\Phi(u)\geq \frac{1}{p^+}\varphi(u)\geq\frac{\|u\|^{p^-}}{p^+}-\frac{C_1}{p^+}.$$
 Therefore  $\Phi(u)\rightarrow\infty$ as $\|u\|\rightarrow\infty$, i.e. $\Phi$  is coercive. Also by similar argument in \cite{Avci}, $\Phi'$ has an inverse
mapping $ (\Phi')^{-1}: W^* \to W$ which is continuous.
Additionally, functional $\Psi$ is a continuously G\^ateaux
differentiable functional and from $(F1)$ whose G\^ateaux derivative
is compact. The solutions of the equation
$I'_{\bar{\lambda}}=\Phi'-\bar{\lambda} \Psi'=0$ are exactly the
solutions for problem \eqref{1}, by Lemma \ref{l}.
 Hence, to prove our result, it is enough to apply Theorem \ref{t1}.\\
Let us define the function $\overline{v}$
$:\mathbb{Z}[0,T+1]\rightarrow\mathbb{R}$ belonging to $W$ by
\begin{equation}\label{v}
\bar{v}(t)=
\begin{cases}
d,\quad\quad\quad\quad k\in [1,T],\\
0, \quad\quad\quad\quad k= 0,T+1,
\end{cases}
\end{equation}
$$r_1=\frac{1}{p^+}\left({c_1K}\right)^{p^+},$$  and
$$r_2=\frac{1}{p^+}\left({c_2K}\right)^{p^+}.$$
 By \eqref{in2}, $r_1,r_2<\frac{1}{p^+}$
 and
 \begin{equation}\label{v-0}
  \Psi(\bar{v})=\sum_{k=1}^{T} F(k,\bar{v}(k))=\sum_{k=1}^{T}
  F(k,d),
 \end{equation}

  \begin{equation}\label{v-1}
  \Phi(\bar{v})=\frac{w(0)}{p(0)}d^{p(0)}+\frac{w(T)}{p(T)}d^{p(T)}+\sum_{k=1}^{T}\frac{q(k)}{p(k)}d^{p(k)}.
  \end{equation}
 By \eqref{in2}, one can conclude that $d\in(0,1)$, therefore $\frac{d^{p^+}}{p^+}A<\Phi(\bar{v})<\frac{d^{p^-}}{p^-}A$, hence by again \eqref{in2}, $r_1<\Phi(\bar{v})<r_2$.
Let be $u\in \Phi^{-1}(-\infty,r_i)$, $i=1,2$
 for all $u\in W$, then one has $\max_{ k\in[1,T]}|u(k)|\leq c_i$, $i=1,2$. Indeed, by
\eqref{1-3}, $\varphi(u)<p^+\Phi(u)<r_ip^+<1$, this means that
$\|u\|<1$, and therefore bearing in mind \eqref{x1} and \eqref{in},
\begin{equation}\label{5-1}
    K_0^{p^+}\|u\|^{p^+}<\|u\|_+^{p^+}<\varphi(u)<r_ip^+,
\end{equation}
hence, $\max_{ k\in[1,T]}|u(k)|\leq(2T+2)^{\frac{p^--1}{p^-}}\|u\|<(2T+2)^{\frac{p^--1}{p^-}}\frac{(r_ip^+)^{\frac{1}{p^+}}}{K_0}=c_i$.
Therefore,
$$\sup_{u\in \Phi^{-1}(-\infty,r_i)}\Psi(u)=\sup_{u\in \Phi^{-1}(-\infty,r_i)}\sum_{k=1}^{T} F(k,u(k))\leq \sum_{k=1}^{T}\max_{|\xi|\leq  c_i}
 F(k,\xi),\quad i=1,2.$$
Thus,
\begin{eqnarray*}
0\leq\beta(r_1,r_2)&\leq& \frac{\sup_{u\in \Phi^{-1}(-\infty,r_2)}\Psi(u)-\Psi(\bar{v})}{\frac{1}{p^+}\left({c_2K}\right)^{p^+}-\Phi(\bar{v})}\nonumber\\
&\leq&\frac{\sum_{k=1}^{T}\max_{|\xi|\leq  c_2} F(k,\xi)-\sum_{k=1}^{T} F(k,d)}{\frac{1}{p^+}\left({c_2K}\right)^{p^+}-\frac{d^{p^-}}{p^-}A}\nonumber\\
&=&a_d(c_2).
\end{eqnarray*}
\noindent On the other hand, one has\\
\begin{eqnarray*}
\rho_1(r_1,r_2)&\geq&\frac{\Psi(\bar{v})-\sup_{u\in\Phi^{-1}(-\infty,r_1)}\Psi(u)}{\Phi(\bar{v})-r_1}\nonumber\\
&\geq&\frac{\sum_{k=1}^{T} F(k,d)-\sum_{k=1}^{T}\max_{|\xi|\leq  c_1} F(k,\xi)}{\Phi(\bar{v})-\frac{1}{p^+}\left({c_1K}\right)^{p^+}}\nonumber\\
&\geq&\frac{\sum_{k=1}^{T} F(k,d)-\sum_{k=1}^{T}\max_{|\xi|\leq  c_1} F(k,\xi)}{\frac{d^{p^-}}{p^-}A-\frac{1}{p^+}\left(c_1K\right)^{p^+}}\nonumber\\
&=&a_d(c_1).
\end{eqnarray*}
Hence, from Assumption $(F2)$, we get
$\beta(r_1,r_2)<\rho_1(r_1,r_2)$. Therefore, owing to Theorem
\ref{t1}, for each $\lambda\in
]\frac{1}{a_d(c_1)},\frac{1}{a_d(c_2)}[$, the functional $I_\lambda$
admits one critical point $u_0 \in W$ such that $r_1<\Phi(u_0)<r_2$.
Hence, the proof is complete.
\end{proof}
\begin{corollary}\label{c1}
If $u_0$ be the ensured solution  in the conclusions of
Theorems \ref{t2}, then
$$\left(\frac{p^-}{p^+}\right)^{\frac{1}{p^-}}\left(c_1K\right)^{\frac{p^+}{p^-}}<\|u_0\|<c_2(2T+2)^{\frac{1-p^-}{p^-}}.$$
\end{corollary}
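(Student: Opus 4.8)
The plan is to extract both estimates directly from the localization $r_1<\Phi(u_0)<r_2$ already obtained for the critical point $u_0$ in the proof of Theorem \ref{t2}, together with the two-sided comparison between $\Phi$ and $\varphi$. First I would record that, since $p^-\le p(k)\le p^+$ for every index, the definitions \eqref{2-1} of $\Phi$ and \eqref{1-3} of $\varphi$ give $\frac{1}{p^+}\varphi(u)\le \Phi(u)\le \frac{1}{p^-}\varphi(u)$ for all $u\in W$, equivalently $p^-\Phi(u)\le\varphi(u)\le p^+\Phi(u)$. I would also recall from the proof of Theorem \ref{t2} that $u_0\in\Phi^{-1}(]r_1,r_2[)$ forces $\|u_0\|<1$, so that both inequalities in \eqref{x1} are at our disposal.

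For the lower bound I would start from $\Phi(u_0)>r_1=\frac{1}{p^+}(c_1K)^{p^+}$. Combining $\varphi(u_0)\ge p^-\Phi(u_0)$ with the right inequality $\varphi(u_0)\le\|u_0\|^{p^-}$ of \eqref{x1} yields
$$\|u_0\|^{p^-}\ge\varphi(u_0)\ge p^-\Phi(u_0)>p^-r_1=\frac{p^-}{p^+}(c_1K)^{p^+},$$
and taking $p^-$-th roots gives $\|u_0\|>\left(\frac{p^-}{p^+}\right)^{1/p^-}(c_1K)^{p^+/p^-}$, which is the left-hand estimate.

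For the upper bound I would start from $\Phi(u_0)<r_2=\frac{1}{p^+}(c_2K)^{p^+}$. Now the left inequality $\|u_0\|_+^{p^+}\le\varphi(u_0)$ of \eqref{x1} together with $\varphi(u_0)\le p^+\Phi(u_0)$ gives $\|u_0\|_+^{p^+}<p^+r_2=(c_2K)^{p^+}$, hence $\|u_0\|_+<c_2K$. Passing to $\|\cdot\|$ via the left inequality $K_0\|u\|\le\|u\|_+$ of \eqref{in} produces $\|u_0\|<c_2K/K_0$. The only computation is then to simplify the constant: from the explicit forms of $K$ and $K_0$ one checks that the powers of $\max\{w^+,q^+\}$ cancel and the exponent of $(2T+2)$ reduces to $\frac{1-p^+}{p^+}-\frac{p^--p^+}{p^+p^-}=\frac{1-p^-}{p^-}$, so that $K/K_0=(2T+2)^{(1-p^-)/p^-}$ and $\|u_0\|<c_2(2T+2)^{(1-p^-)/p^-}$, as claimed. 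There is no serious obstacle here; the statement is a bookkeeping consequence of \eqref{x1}, \eqref{in} and the localization of $u_0$, and the single point requiring care is the exponent arithmetic in simplifying $K/K_0$.
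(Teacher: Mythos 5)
Your proof is correct and follows essentially the same route as the paper: both arguments extract the bounds from the localization $r_1<\Phi(u_0)<r_2$, combined with the comparison $p^-\Phi\le\varphi\le p^+\Phi$, Lemma \ref{L1}, and the norm relation \eqref{in}. The only cosmetic difference is that the paper cites the already-established chain \eqref{5-1} for the upper bound, whereas you re-derive it and verify the simplification $K/K_0=(2T+2)^{(1-p^-)/p^-}$ explicitly, which the paper leaves implicit.
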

\begin{proof}
Since $\Phi(u_0)<r_2$, taking into account \eqref{5-1}, one can conclude that  $\|u_0\|<c_2(2T+2)^{\frac{1-p^-}{p^-}}$. On the other hand, by \eqref{x1}, since
$\varphi(u_0)<1$,
$$r_1<\Phi(u_0)<\frac{1}{p^-}\varphi(u_0)<\frac{\|u_0\|^{p^-}}{p^-}\Rightarrow \frac{1}{p^+}\left({c_1K}\right)^{p^+}<\frac{\|u_0\|^{p^-}}{p^-}.$$
That follows assertion.
\end{proof}
 We now present an example to illustrate the result of Theorem
\ref{t2}.
\begin{example}\label{e3.3}
Let $T=10$, $p(k)=\frac{2}{11}k+3$, $q(k)=2^{k}$,
$w(k)=e^{k(10-k)^2}$, $\alpha(k)=2$ and
$f(k,x)=e^{(k+2)(k-13)}\frac{x}{(x^2+10^{-11})^2}$ for
$k=1,2,3,...,10$ and $x\in\mathbb{R}$. Hence $p^-=3$, $p^+=5$,
$\alpha^+=2$,
  $A=2^{11}$ and $K=22^{-\frac{4}{5}}e^{-\frac{98}{5}}$ and $F(k,x)=\frac{10^{11}}{2}e^{(k+2)(k-13)}\frac{x^2}{x^2+10^{-11}}$. Put $c_0=0.000012$, $c_1=10^{-9}$ and
  $c_2=10^{9}$ and $d=10^{-5}$.
  Simple calculations show that
$$
F(k,t)\leq 0.000012(1+|t|^{2}),  \ \ \ \ \ \forall(k,t)\in
\mathbb{Z}[1,10]\times\mathbb{R}.
$$
  \begin{eqnarray*}
    a_d(c_1) &=& \frac{\sum^{10} _{k=1} F(k,c_1)-\sum^{10}_{k=1} F(k,d)}{\frac{1}{p^+}\left({c_1K}\right)^{p^+}-\frac{d^{p^-}}{p^-}A} \\
     &=&\frac{ \{\frac{10^{11}}{2}\frac{\left(10^{-9}\right)^2}{\left(10^{-9}\right)^2+10^{-11}}-
     \frac{10^{11}}{2}\frac{\left(10^{-5}\right)^2}{\left(10^{-5}\right)^2+10^{-11}}\}\sum^{10} _{k=1}e^{(k+2)(k-13)}}{\frac{1}{5}\left({10^{-9}\times 22^{-\frac{4}{5}}e^{-\frac{98}{5}}}\right)^{5}-\frac{\left(10^{-5}\right)^{3}}{3}2^{11}} \\
      &=&\frac{\{\frac{10^{-18}-10^{-10}}{10^{-28}+10^{-29}+10^{-21}+10^{-22}}\}\sum^{5} _{k=1}e^{(k+2)(k-13)}}{\frac{1}{5}\{10^{-45} 22^{-4}e^{-98}\}-\frac{1}{3}\{10^{-15}2^{11}\}} \\
    &=&30898916/775,
  \end{eqnarray*}
   and
    \begin{eqnarray*}
    a_d(c_2) &=& \frac{\sum^{10} _{k=1} F(k,c_2)-\sum^{10}_{k=1} F(k,d)}{\frac{1}{p^+}\left({c_2K}\right)^{p^+}-\frac{d^{p^-}}{p^-}A} \\
     &=&\frac{\{\frac{10^{11}}{2}\frac{\left(10^{9}\right)^2}{\left(10^{9}\right)^2+10^{-11}}-
     \frac{10^{11}}{2}\frac{\left(10^{-5}\right)^2}{\left(10^{-5}\right)^2+10^{-11}}\}\sum^{10} _{k=1}e^{(k+2)(k-13)}}{\frac{1}{5}\left({10^{9}\times 22^{-\frac{4}{5}}e^{-\frac{98}{5}}}\right)^{5}-\frac{\left(10^{-5}\right)^{3}}{3}2^{11}} \\
      &=&\frac{\{\frac{10^{18}-10^{-10}}{10^{8}+10^{7}+10^{-21}+10^{-22}}\}\sum^{5} _{k=1}e^{(k+2)(k-13)}}{\frac{1}{5}\{10^{45} 22^{-4}e^{-98}\}-\frac{1}{3}\{10^{-15}2^{11}\}} \\
    &=&0/009.
  \end{eqnarray*}
  Therefore the conditions $F1$ and $F2$ hold.
 Then, by Theorem \ref{t2}, for every $\lambda\in]0.000000033,111[$ the
problem
$$\begin{cases}
-\Delta(e^{k(10-k)^2}|
\Delta u(k-1)|^{p(k-1)-2}\Delta u(k-1))+2^{k}| u(k)|^{p(k)-2} u(k)=\lambda \left(\frac{ u(k)^2e^{(k+2)(k-13)}}{(u(k)^2+10^{-11})^2}\right), \\
u(0)=u(11)=0,
\end{cases}$$
for every $k \in [1,10]$, has at least one non-trivial solution $u_0$ that by Corollary \ref{c1},
$\frac{0.6^{\frac{1}{3}}}{10^{15}\times22^{\frac{4}{3}}\times e^{\frac{98}{3}}}<||u_0||<\frac{10^9}{22^{\frac{2}{3}}}$.
\end{example}
 Here we point out an immediate consequence of Theorem \ref{t2} as
 follows.
\begin{theorem}\label{t3}
 Assume the condition $(F1)$ holds and assume that there exist two positive constants $c$ and $d$ with
$Ap^+d^{p^-}<K^{p^+}p^-c^{p^+}<p^-$
such that\\\\
$(F3)$
$\sum^T _{k=1} \max_{ |\xi|\leq c} F(k,\xi)<\frac{p^-(cK)^{p^+}}{p^+d^{p^-}A}\sum^T_{k=1} F(k,d).$\\\\
Then, for each
$$\lambda\in \left]\frac{\frac{d^{p^-}}{p^-}A}{\sum^T_{k=1} F(k,d)},\frac{\frac{1}{p^+}\left({cK}\right)^{p^+}-\frac{d^{p^-}}{p^-}A}{\sum^T _{k=1} \max_{ |\xi|\leq c} F(k,\xi)-\sum^T_{k=1} F(k,d)}\right[,$$ the problem \eqref{1} has at least one non-trivial
solution $u_0\in W$ such that $||u_0||_\infty < c$.
\end{theorem}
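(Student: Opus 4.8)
The plan is to recognize Theorem \ref{t3} as the degenerate specialization of Theorem \ref{t2} obtained by taking the non-negative constant $c_1=0$ and renaming $c_2=c$. Since $(F1)$ is assumed outright, the entire task reduces to checking three things: that with this choice the admissibility chain \eqref{in2} collapses exactly to the hypothesis $Ap^+d^{p^-}<K^{p^+}p^-c^{p^+}<p^-$, that the coercivity/geometry condition $(F2)$ is equivalent to $(F3)$, and that the parameter interval produced by Theorem \ref{t2} coincides with the interval written in the statement. The concluding $L^\infty$ bound will then follow from Corollary \ref{c1} together with \eqref{4}.

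First I would compute $a_d(0)$. Because $\max_{|\xi|\leq 0}F(k,\xi)=F(k,0)=\int_0^0 f(k,\xi)\,d\xi=0$, the definition of $a_d(c)$ gives a numerator $-\sum_{k=1}^T F(k,d)$ and a denominator $-\frac{d^{p^-}}{p^-}A$, so that
$$a_d(0)=\frac{\sum_{k=1}^T F(k,d)}{\frac{d^{p^-}}{p^-}A},\qquad \frac{1}{a_d(0)}=\frac{\frac{d^{p^-}}{p^-}A}{\sum_{k=1}^T F(k,d)},$$
which is precisely the left endpoint of the interval in Theorem \ref{t3}; likewise $\frac{1}{a_d(c)}$ is its right endpoint, so $\big]\frac{1}{a_d(0)},\frac{1}{a_d(c)}\big[$ is the stated $\lambda$-range. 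Next I would verify \eqref{in2} with $c_1=0$: the leftmost inequality reads simply $0<d$, while raising the middle and rightmost inequalities to the power $p^-$ turns them into $Ap^+d^{p^-}<K^{p^+}p^-c^{p^+}$ and $K^{p^+}p^-c^{p^+}<p^-$, which are exactly the standing hypotheses. In particular this guarantees $r_2:=\frac{1}{p^+}(cK)^{p^+}>\frac{d^{p^-}}{p^-}A$, so the relevant denominators are positive.

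The small point requiring care, which I regard as the only genuine (if mild) obstacle, is deducing $(F2)$ from $(F3)$ with the correct handling of signs. Writing $M=\sum_{k=1}^T\max_{|\xi|\leq c}F(k,\xi)$, $D=\sum_{k=1}^T F(k,d)$, $R=\frac{1}{p^+}(cK)^{p^+}$ and $S=\frac{d^{p^-}}{p^-}A$, one has $R>S>0$, and the identity $\frac{p^-(cK)^{p^+}}{p^+ d^{p^-}A}=\frac{R}{S}$ shows that $(F3)$ is exactly $MS<RD$. Since both $R-S>0$ and $S>0$, cross-multiplying the desired inequality $a_d(c)=\frac{M-D}{R-S}<\frac{D}{S}=a_d(0)$ yields $(M-D)S<D(R-S)$, i.e. $MS<RD$, again $(F3)$. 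Thus $(F2)$ holds, all hypotheses of Theorem \ref{t2} are met, and it furnishes a non-trivial critical point $u_0\in W$ for every $\lambda$ in the stated interval.

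Finally, for the estimate $\|u_0\|_\infty<c$, I would invoke Corollary \ref{c1} with $c_2=c$, which gives $\|u_0\|<c\,(2T+2)^{\frac{1-p^-}{p^-}}$, and then combine it with \eqref{4} to obtain
$$\|u_0\|_\infty\leq (2T+2)^{\frac{p^--1}{p^-}}\|u_0\|<(2T+2)^{\frac{p^--1}{p^-}}\,c\,(2T+2)^{\frac{1-p^-}{p^-}}=c,$$
completing the argument. No new analytic input beyond Theorem \ref{t2} is needed; the proof is purely a matter of the bookkeeping above.
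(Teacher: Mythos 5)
Your proposal is correct and follows essentially the same route as the paper's own proof: specialize Theorem \ref{t2} with $c_1=0$, $c_2=c$, check that $(F3)$ gives $a_d(c)<a_d(0)$ (i.e.\ $(F2)$), and obtain the bound $\|u_0\|_\infty<c$ from Corollary \ref{c1} combined with \eqref{4}. The only difference is that you spell out the bookkeeping (the collapse of \eqref{in2}, the computation of $a_d(0)$, the sign-checked cross-multiplication, and the cancellation of the $(2T+2)$ factors) that the paper leaves implicit.
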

\begin{proof}
By applying Theorem \ref{t2} and picking
$c_1=0$, $c_2=c$ the conclusion follows at once. Indeed, owing to our assumptions, one has
\begin{eqnarray*}a_{d}(c)&=&\frac{\sum^T _{k=1} \max_{ |\xi|\leq c} F(k,\xi)-\sum^T_{k=1} F(k,d)}{\frac{1}{p^+}\left({cK}\right)^{p^+}-\frac{d^{p^-}}{p^-}A}\\
&<&\frac{\sum^T_{k=1} F(k,d)}{\frac{d^{p^-}}{p^-}A}\\&=&a_{d}(0).
\end{eqnarray*}
Hence, Theorem \ref{t2} along with \eqref{4} and Corollary \ref{c1}, ensures the conclusion.
\end{proof}
\begin{remark}\label{r1}
If $f$ is non-negative, then, by consideration \cite[Theorem 2.2]{BC1}, the obtained solution $u_0$ in the conclusions of
Theorems \ref{t2} and \ref{t3} is non-negative. If $f(k,0)=0$ for all $k\in[0,T]$, owing to
\cite[Theorem 2.3]{BC1}, the obtained solution $u_0$ is positive
(see \cite[Remark 2.1]{BC1}).
\end{remark}
Next, we consider the  following problem, as a special case of the problem \eqref{1},
 \begin{equation}
\label{e3.1}
\begin{cases}
-\Delta(w(k-1)|
\Delta u(k-1)|^{p(k-1)-2}\Delta u(k-1))+q(k)| u(k)|^{p(k)-2}u(k)=\lambda \beta(k)g(u(k)),\\
u(0)=u(T+1)=0,
\end{cases}
\end{equation}
for any $k \in [1,T]$, where $\beta: [1,T]\to\mathbb{R}$ is a
nonnegative function and $g\in C(\mathbb{R},\mathbb{R})$ is a
continuous function. Put $G(t)=\int_{0}^tg(\xi)d\xi$ for all
$t\in\mathbb{R}$.
\begin{theorem}\label{t3}
Assume the condition $(F1)$ holds and assume that there exist two
positive constants $c$ and $d$ with
$Ap^+d^{p^-}<K^{p^+}p^-c^{p^+}<p^-$
such that\\\\
$(F4)$
$\max_{ |\xi|\leq c}G(\xi) <\frac{p^-(cK)^{p^+}}{p^+d^{p^-}A}G(d).$\\\\
Then, for each
$$\lambda\in \left]\frac{\frac{d^{p^-}}{p^-}A}{G(d)\sum^T_{k=1}\beta(k)},\frac{\frac{1}{p^+}\left({cK}\right)^{p^+}-\frac{d^{p^-}}
{p^-}A}{ [\max_{ |\xi|\leq c} G(\xi)-G(d)]\sum^T
_{k=1}\beta(k)}\right[,
$$ the problem \eqref{e3.1} has at least one non-trivial solution
$u_0\in W$ such that $||u_0||_\infty <c$.
\end{theorem}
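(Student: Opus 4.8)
The plan is to obtain this statement as an immediate specialization of the general existence result established immediately above (for an arbitrary continuous nonlinearity $f$), applied to the separated right-hand side $f(k,t)=\beta(k)g(t)$. First I would observe that for such $f$ the associated potential is
$$F(k,t)=\int_0^t\beta(k)g(\xi)\,d\xi=\beta(k)G(t),$$
because the weight $\beta(k)$ is independent of the integration variable. Thus the present hypotheses on $g$, $c$, $d$ should translate verbatim into the hypotheses of the general theorem once the two sums $\sum_{k=1}^T\max_{|\xi|\le c}F(k,\xi)$ and $\sum_{k=1}^T F(k,d)$ are evaluated.

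Second, using that $\beta$ is nonnegative, I would factor the weight out of both the maximum and the sum, obtaining
$$\sum_{k=1}^T\max_{|\xi|\le c}F(k,\xi)=\Big(\sum_{k=1}^T\beta(k)\Big)\max_{|\xi|\le c}G(\xi),\qquad \sum_{k=1}^T F(k,d)=\Big(\sum_{k=1}^T\beta(k)\Big)G(d).$$
The nonnegativity of $\beta$ is exactly what permits pulling the factor through the maximum. Substituting these identities into condition $(F3)$ of the general theorem and cancelling the common factor $\sum_{k=1}^T\beta(k)$ produces precisely condition $(F4)$; the constant restriction $Ap^+d^{p^-}<K^{p^+}p^-c^{p^+}<p^-$ is literally the same in both statements, and $(F1)$ is assumed outright. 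Likewise, inserting the same two identities into the admissible interval of the general theorem reproduces, after the cancellation, exactly the interval $\big]\,\tfrac{d^{p^-}A/p^-}{G(d)\sum_{k=1}^T\beta(k)},\ \cdots\,\big[$ displayed here. Hence every hypothesis, together with the full conclusion (including the bound $\|u_0\|_\infty<c$), follows directly from the general theorem.

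The only point requiring care is the legitimacy of the cancellation, namely that $\sum_{k=1}^T\beta(k)>0$, which is needed both to divide and to keep the interval endpoints finite. I would note that $(F4)$ already forces $G(d)>0$: the constant condition gives $\tfrac{p^-(cK)^{p^+}}{p^+d^{p^-}A}>1$, while $\max_{|\xi|\le c}G(\xi)\ge G(0)=0$, so $(F4)$ yields $G(d)>0$. Consequently $\beta\not\equiv0$ is the natural standing assumption, since otherwise the right-hand side vanishes identically and the problem retains only the trivial solution; under $\sum_{k=1}^T\beta(k)>0$ the reduction is fully justified. I do not anticipate any genuine obstacle beyond this bookkeeping, because all the analytic work, namely the coercivity and regularity of $\Phi$, the compactness of $\Psi'$, and the norm estimates of Lemma \ref{L1} and inequality \eqref{4}, has already been carried out in the proof of the general theorem.
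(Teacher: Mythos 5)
Your proposal is correct and is essentially the paper's own argument: the paper proves this theorem by applying Theorem \ref{t2} with $c_1=0$, $c_2=c$ and using the factorization $F(k,t)=\beta(k)G(t)$ to get $a_d(c)<a_d(0)$ from $(F4)$, which is exactly the specialization you carry out (you merely route it through the preceding general-$f$ theorem, which is itself Theorem \ref{t2} with those same choices of $c_1,c_2$). Your extra care about the cancellation --- noting that $\sum_{k=1}^{T}\beta(k)>0$ must hold, as it is implicitly forced by the finiteness of the interval's endpoints --- is a point the paper glosses over, but it does not constitute a different approach.
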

\begin{proof}
Again, by applying Theorem \ref{t2} and picking $c_1=0$ and $c_2=c$ we have the conclusion. Indeed, owing to our assumptions, one has
\begin{eqnarray*}a_{d}(c)&=&\frac{\sum^T _{k=1}\beta(k)\left[\max_{ |\xi|\leq c} G(\xi) -G(d)\right] }{\frac{1}{p^+}\left({cK}\right)^{p^+}-\frac{d^{p^-}}{p^-}A}\\
&<&\frac{G(d)\sum^T_{k=1}\beta(k)
}{\frac{d^{p^-}}{p^-}A}\\&=&a_{d}(0).
\end{eqnarray*}
Thus, considering Theorem \ref{t2}, \eqref{4} and Corollary \ref{c1}, we obtain the desired conclusion.
\end{proof}
We now proceed with the proof of Theorem \ref{t1.1}.\\\\
{\bf PROOF OF THEOREM \ref{t1.1}}: This follows from Theorem \ref{t3} at once, by letting $p(k)=k+3$, $\alpha(k)=2$ and $w(k)=q(k)=\beta(k)=1$
  for every $k \in [1,T]$.\\

Here, we present the following example to illustrate the result of
Theorem \ref{t1.1}.


\begin{example}
Consider the problem
\begin{equation}\label{e3.4}
\left\{\begin{array}{ll}
-\Delta(|\Delta u(k-1)|^{k}\Delta u(k-1))+|u(k)|^{k+1}u(k)=\frac{\lambda}{ (400u(k))^2+1}, \quad k \in [1,10], \\
u(0)=u(11)=0.
\end{array}\right.
\end{equation}
Put $T=10$ and select  $d=0.1$, $c=17.1$ and $c_0=0.0039$ that
satisfying \eqref{1-1-1}, growth condition and \eqref{1-1-2},  that
is $d^{3}<\frac{1}{56\times22^{13}}c^{14}<\frac{1}{56}$ and
$\frac{1}{400}\arctan(400t)<c_0(1+t^2)$, for every $t\in\mathbb{R}$
and
$$\frac{\arctan(400c)}{c^{14}}<(\frac{1}{56\times22^{13}})\frac{\arctan(400d)}{d^{3}},$$
respectively, then for each $\lambda\in
\left]0.1035061724,67.87674577\right[$ the problem \eqref{e3.4} has
at least one  non-trivial  solution $u_{0}\in \{u : [0,10]\to
\mathbb{R} : u(0) = u(11) =0 \}$, that by Remark \ref{r1} is
non-negative.
 \end{example}
Now we state the second main result of the paper. We will apply
Theorem \ref{t1-1}. To do so, we provide the following theorem.
 \begin{theorem}\label{t4}
 Assume the condition $(F1)$ holds and
\begin{itemize}
\item[(F5)] There exist constants $d,c_{3}>0$ with $\frac{1}{\sqrt[p^+]{A}}>d>c_{3}\frac{K}{\sqrt[p^+]{A}}$ such that
\begin{equation*}
\frac{p^{+}}{(c_{3}K)^{p^{+}}}Tc_{0} (1+
\max\{c_{3}^{\alpha^{+}},c_{3}^{\alpha^{-}}\})<\hat{d}^{-1}\sum_{k=1}^{T}F(k,d),
\end{equation*}%
where
$\hat{d}=\frac{w(0)d^{p(0)}}{^{p(0)}}+\frac{w(T)d^{p(T)}}{^{p(T)}}+\sum_{k=1}^{T}\frac{q(k)}{p(k)}d^{p(k)}$.
\end{itemize}

Then for each $\lambda \in
\Lambda_{d}:=\left]\frac{1}{\hat{d}^{-1}\sum_{k=1}^{T}F(k,d)},+\infty\right[
$, the problem \eqref{1} admits at least one nontrivial weak
solution.
\end{theorem}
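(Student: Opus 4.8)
The plan is to apply Theorem~\ref{t1-1} in the variational framework $X=W$, with $\Phi$ and $\Psi$ as in \eqref{2-1} and \eqref{2-2}, so that $I_\lambda=\Phi-\lambda\Psi$. All the regularity demanded by Theorem~\ref{t1-1}---that $\Phi$ is continuously G\^ateaux differentiable with a continuous inverse derivative and that $\Psi$ is continuously G\^ateaux differentiable with compact derivative---was already checked in the proof of Theorem~\ref{t2} and transfers verbatim, and by Lemma~\ref{l} the critical points of $I_\lambda$ coincide with the solutions of \eqref{1}. Hence the task reduces to exhibiting an admissible level $r$ and verifying the two hypotheses that are genuinely new relative to Theorem~\ref{t2}: a sharp lower bound for $\rho_2(r)$, and the coercivity of $I_\lambda$.

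First I would fix $r=\frac{1}{p^+}(c_3K)^{p^+}$ and take the test function $\bar v$ of \eqref{v}. Since $\Phi\ge0$, $\Phi(0)=0$, and $\Phi$ is coercive (shown below), we have $\inf_W\Phi=0<r<+\infty=\sup_W\Phi$, so $r$ is admissible. The constraint $d>c_3K/\sqrt[p^+]{A}$ in $(F5)$ rewrites as $d^{p^+}A>(c_3K)^{p^+}$; combined with the identity $\Phi(\bar v)=\hat d$ from \eqref{v-1} and the bound $\Phi(\bar v)\ge\frac{d^{p^+}}{p^+}A$ (valid because $d\in(0,1)$, which $(F5)$ forces via $d<1/\sqrt[p^+]{A}$ and $A\ge3$), this gives $\Phi(\bar v)\ge\frac{d^{p^+}}{p^+}A>r$, so $\bar v\in\Phi^{-1}(]r,+\infty[)$. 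The same two constraints also yield $c_3K<1$, i.e. $p^+r=(c_3K)^{p^+}<1$, which is exactly what is needed for the sublevel-set estimate below.

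Next I would bound $\rho_2(r)$ from below by the single competitor $\bar v$. For any $u$ with $\Phi(u)\le r$ one has $\varphi(u)\le p^+\Phi(u)\le(c_3K)^{p^+}<1$, whence the chain of inequalities used in Theorem~\ref{t2} (through \eqref{x1}, \eqref{5-1} and \eqref{4}) gives $\|u\|_\infty\le c_3$, and then $(F1)$ yields $\Psi(u)=\sum_{k=1}^{T}F(k,u(k))\le S$, where $S:=Tc_0\bigl(1+\max\{c_3^{\alpha^+},c_3^{\alpha^-}\}\bigr)$. Therefore
\[
\rho_2(r)\ge\frac{\Psi(\bar v)-\sup_{\Phi(u)\le r}\Psi(u)}{\Phi(\bar v)-r}\ge\frac{\Psi(\bar v)-S}{\hat d-r}.
\]
Here $(F5)$ enters decisively: since $r=\frac{1}{p^+}(c_3K)^{p^+}$ it can be read as $S/r<\Psi(\bar v)/\Phi(\bar v)$, and the elementary mediant inequality $\frac{\Psi(\bar v)-S}{\Phi(\bar v)-r}\ge\frac{\Psi(\bar v)}{\Phi(\bar v)}$ (valid whenever $0\le S/r\le\Psi(\bar v)/\Phi(\bar v)$ and $\Phi(\bar v)>r$) delivers $\rho_2(r)\ge\Psi(\bar v)/\Phi(\bar v)=\hat d^{-1}\sum_{k=1}^{T}F(k,d)>0$. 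In particular $\frac{1}{\rho_2(r)}\le\frac{\hat d}{\sum_{k=1}^{T}F(k,d)}$, so $\Lambda_d\subseteq\,]\frac{1}{\rho_2(r)},+\infty[$.

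The step I expect to be the crux---and the one truly absent from the proof of Theorem~\ref{t2}, where Theorem~\ref{t1} only required $\Phi$ (not $I_\lambda$) to be coercive---is the coercivity of $I_\lambda$ demanded by Theorem~\ref{t1-1}. This is where the subcritical growth $\alpha^+<p^-$ built into $(F1)$ is indispensable. For $\|u\|\ge1$, Lemma~\ref{L1} gives $\Phi(u)\ge\frac{1}{p^+}\varphi(u)\ge\frac{\|u\|^{p^-}}{p^+}-\frac{C_1}{p^+}$, while $(F1)$ together with \eqref{4} bounds $\Psi(u)\le c_0T+c_0T(2T+2)^{\frac{(p^--1)\alpha^+}{p^-}}\|u\|^{\alpha^+}$ once $\|u\|$ is large. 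Thus
\[
I_\lambda(u)\ge\frac{\|u\|^{p^-}}{p^+}-\lambda c_0T(2T+2)^{\frac{(p^--1)\alpha^+}{p^-}}\|u\|^{\alpha^+}-\Bigl(\frac{C_1}{p^+}+\lambda c_0T\Bigr),
\]
and since $\alpha^+<p^-$ the leading term $\|u\|^{p^-}$ dominates, so $I_\lambda(u)\to+\infty$ as $\|u\|\to\infty$ for every $\lambda>0$, in particular for every $\lambda>1/\rho_2(r)$. With all hypotheses verified, Theorem~\ref{t1-1} provides, for each $\lambda\in\Lambda_d$, a point $u_{0,\lambda}\in\Phi^{-1}(]r,+\infty[)$ with $I'_\lambda(u_{0,\lambda})=0$; since $\Phi(u_{0,\lambda})>r>0=\Phi(0)$ it is non-trivial, and by Lemma~\ref{l} it solves \eqref{1}, which completes the argument.
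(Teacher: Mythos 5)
Your proposal is correct and follows essentially the same route as the paper's own proof: the same level $r=\frac{1}{p^+}(c_3K)^{p^+}$, the same test function $\bar v\equiv d$, the same sublevel-set estimate giving $\sup_{\Phi(u)\le r}\Psi(u)\le Tc_0(1+\max\{c_3^{\alpha^+},c_3^{\alpha^-}\})$, the identical computation yielding $\rho_2(r)\ge\Psi(\bar v)/\Phi(\bar v)=\hat d^{-1}\sum_{k=1}^T F(k,d)$, the same coercivity argument for $I_\lambda$ via $\alpha^+<p^-$, and the same invocation of Theorem \ref{t1-1}. You are in fact slightly more careful than the paper in spelling out that $(F5)$ forces $d\in(0,1)$ and $(c_3K)^{p^+}<1$, in keeping the constant $C_1/p^+$ in the coercivity bound, and in justifying non-triviality via $\Phi(u_{0,\lambda})>r>0$.
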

\begin{proof}
As mentioned in the proof of Theorem \ref{t1}, the regularities of
$\Phi$ and $\Psi$ hold. Let us define the function $\overline{v}$
$:\mathbb{Z}[0,T+1]\rightarrow\mathbb{R}$ belonging to $W$ by the
formula \eqref{v}. Then from \eqref{v-1} we deduce that
\begin{equation*}
\Phi( \overline{v})>\frac{A}{p^{+}}d^{p^+}.
\end{equation*}
Let fix $r=\frac{(c_{3}K)^{p^{+}}}{p^{+}}$. Since
$d^{p^+}>\frac{(c_{3}K)^{p^+}}{A}$, we get $\Phi(\overline{v})>r$.
On the other hand, by Lemma \ref{L1}, we have that $\Phi$ is bounded
on $W$. Therefore, since $\overline{v}\in W$ and $\inf_{u\in
W}\Phi(u)=\Phi(0)=0$ we obtain
\begin{equation*}
\inf_{u\in W}\Phi(u)<r<\Phi(\overline{v})<\sup_{u\in W}\Phi(u).
\end{equation*}
For each $u\in \Phi ^{-1}]-\infty,r[$, by similar argument for
obtaining \eqref{5-1},  we have
\begin{equation*}
\|u\|< \frac{(rp^{+})^{\frac{1}{p^{+}}}}{K_{0}},
\end{equation*}
which leads us, by \eqref{4}, to
\begin{equation*}
\max_{k\in [1,T]}|u(k)|\leq
(2T+2)^{\frac{p^{-}-1}{p^{-}}}\|u\|<(2T+2)^{\frac{p^{-}-1}{p^{-}}}\frac{(rp^{+})^{\frac{1}{p^{+}}}}{K_{0}}=:c_{3}.
\end{equation*}
Therefore, from the condition $(F1)$ and $(F5)$, it reads
\begin{eqnarray}\label{3.8}
\sup_{u\in \Phi ^{-1}(-\infty,r)}\Psi(u)&\leq&
\sum_{k=1}^{T}\max_{|\xi|\leq c_{3}} F(k,\xi) \leq
\sum_{k=1}^{T}\max_{|\xi|\leq c_{3}} c_{0}(1+|\xi|^{\alpha(k)})
\nonumber \\
&< & Tc_{0} (1+
\max\{c_{3}^{\alpha^{+}},c_{3}^{\alpha^{-}}\})<r\frac{\Psi(\overline{v})}{\Phi(\overline{v})}.
\end{eqnarray}
Considering \eqref{3.8}, we obtain
\begin{eqnarray*}
\rho_{2}(r)&=&\sup_{v\in \Phi
^{-1}(r,\infty)}\frac{\Psi(v)-\sup_{u\in \Phi
^{-1}(-\infty,r)}\Psi(u)}{\Phi(v)-r}\\
&\geq& \frac{\Psi(\overline{v})-\sup_{u\in \Phi
^{-1}(-\infty,r)}\Psi(u)}{\Phi(\overline{v})-r}\geq
\frac{\Psi(\overline{v})-r\frac{\Psi(\overline{v})}{\Phi(\overline{v})}}{\Phi(\overline{v})-r}=
\frac{\Psi(\overline{v})}{\Phi(\overline{v})}>0.
\end{eqnarray*}
Let us proceed with the coercivity of $I_{\lambda }$. To this end,
let $u\in W$ such that $\Vert u\Vert\rightarrow +\infty $. Then,
without loss of generality, we can assume that $\Vert u\Vert>1$.
Then from \eqref{x1}, \eqref{4} and condition $(F1)$, it reads
\begin{eqnarray*}
I_{\lambda}(u) &\geq& \frac{\|u\|^{p^{-}}}{p^{+}}-\lambda \sum_{k=1}^{T}c_{0}(1+|u(k)|^{\alpha(k)}) \\
&\geq& \frac{\|u\|^{p^{-}}}{p^{+}}-\lambda \sum_{k=1}^{T}c_{0}(1+(\max_{k\in [1,T]}|u(k)|)^{\alpha(k)}) \\
&\geq& \frac{\|u\|^{p^{-}}}{p^{+}}-\lambda \sum_{k=1}^{T}c_{0}(1+((2T+2)^{\frac{p^{-}-1}{p^{-}}}\|u\|)^{\alpha(k)}) \\
&\geq& \frac{\|u\|^{p^{-}}}{p^{+}}-\lambda Tc_{0}(2T+2)^{\frac{(p^{-}-1)\alpha^{+}}{p^{-}}}\|u\|^{\alpha^{+}}-\lambda Tc_{0}, \\
\end{eqnarray*}%
that due to $\alpha^+<p^-$, it follows that $I_{\lambda }$ be
coercive. Consequently, all assumptions of Theorem 2.2 are verified.
Therefore, for each $\lambda \in \Lambda _{d}$, the problem
\eqref{1} admits at least one nontrivial weak solution.
\end{proof}
In the following we give a corollary which is based on Theorem 2.4
of \cite {Bonanno}.

\begin {corollary}\label{c10} Assume that $(F1)$ and $(F5)$ holds. Then for each\\ $\lambda \in \Lambda _{r}:=\left]\frac{1}{%
\hat{d}^{-1}\sum_{k=1}^{T}F(k,d)},\frac{r}{Tc_{0} (1+
\max\{c_{3}^{\alpha^{+}},c_{3}^{\alpha^{-}}\})}\right[$, the problem
\eqref{1} admits at least three distinct weak solutions.
\end {corollary}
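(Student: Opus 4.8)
The plan is to apply the three-critical-points theorem of Bonanno (Theorem 2.4 of \cite{Bonanno}) in exactly the variational framework already set up for Theorem \ref{t4}. Besides the structural regularity of $\Phi$ and $\Psi$, that theorem requires a level $r$ with $\inf_X\Phi<r<\Phi(\bar v)$ for a suitable test element $\bar v$, the strict separation inequality $\frac{1}{r}\sup_{\Phi(u)\le r}\Psi(u)<\frac{\Psi(\bar v)}{\Phi(\bar v)}$, and the coercivity of $I_\lambda=\Phi-\lambda\Psi$ for $\lambda$ in the resulting parameter window; under these it produces three distinct critical points for each such $\lambda$. My claim is that every one of these hypotheses has already been checked, essentially verbatim, inside the proof of Theorem \ref{t4}, so the corollary follows by collecting those facts and reading off the interval.

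First I would keep $X=W$, $\Phi,\Psi$ as in \eqref{2-1}--\eqref{2-2}, $\bar v$ as in \eqref{v}, and $r=\frac{(c_{3}K)^{p^{+}}}{p^{+}}$. The structural assumptions — $\Phi$ coercive, sequentially weakly lower semicontinuous, of class $C^1$ with $(\Phi')^{-1}$ continuous on $W^{*}$, $\Psi$ of class $C^1$ with compact derivative, and $\Phi(0)=\Psi(0)=0$ — are precisely those established in the proofs of Theorems \ref{t2} and \ref{t4}. Next, from \eqref{v-0} and \eqref{v-1} I record $\Psi(\bar v)=\sum_{k=1}^{T}F(k,d)$ and $\Phi(\bar v)=\hat d$; the chain of estimates in Theorem \ref{t4} gives $\inf_W\Phi=\Phi(0)=0<r<\Phi(\bar v)$, so the admissible level condition holds, and $\Psi(\bar v)>0$ follows from $(F5)$.

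The two quantitative hypotheses are then immediate. The separation inequality is exactly \eqref{3.8}, where it is shown that $\sup_{u\in\Phi^{-1}(-\infty,r)}\Psi(u)\le Tc_{0}(1+\max\{c_{3}^{\alpha^{+}},c_{3}^{\alpha^{-}}\})<r\,\frac{\Psi(\bar v)}{\Phi(\bar v)}$, the last step being $(F5)$ rewritten; dividing by $r$ yields $\frac{1}{r}\sup_{\Phi(u)\le r}\Psi(u)<\frac{\Psi(\bar v)}{\Phi(\bar v)}$. Since moreover $\sup_{\Phi(u)\le r}\Psi(u)\le Tc_{0}(1+\max\{c_{3}^{\alpha^{+}},c_{3}^{\alpha^{-}}\})$ and $\frac{\Phi(\bar v)}{\Psi(\bar v)}=\frac{1}{\hat d^{-1}\sum_{k=1}^{T}F(k,d)}$, Bonanno's parameter window $\left]\frac{\Phi(\bar v)}{\Psi(\bar v)},\frac{r}{\sup_{\Phi\le r}\Psi}\right[$ contains the stated interval $\Lambda_{r}$, so the conclusion applies to every $\lambda\in\Lambda_{r}$. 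The coercivity of $I_\lambda$ for such $\lambda$ is the final computation in the proof of Theorem \ref{t4}, which, thanks to $\alpha^{+}<p^{-}$, works for every $\lambda>0$. Applying Theorem 2.4 of \cite{Bonanno} then yields three distinct critical points of $I_\lambda$, and by Lemma \ref{l} these are three distinct weak solutions of \eqref{1}.

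I expect no genuinely new analytic difficulty; the only point requiring care is the bookkeeping between the open sublevel set $\Phi^{-1}(-\infty,r)$ used in \eqref{3.8} and the closed sublevel set $\{\Phi(u)\le r\}$ appearing in Bonanno's theorem. This gap is harmless: the uniform bound $\max_{k\in[1,T]}|u(k)|\le c_{3}$ derived in Theorem \ref{t4} holds equally under $\Phi(u)\le r$, so the strict inequality in \eqref{3.8} persists on the closed set and the separation hypothesis remains genuinely strict. The remaining task is merely to confirm the inclusion $\Lambda_{r}\subseteq\left]\frac{\Phi(\bar v)}{\Psi(\bar v)},\frac{r}{\sup_{\Phi\le r}\Psi}\right[$, which is exactly what makes the three-solutions conclusion valid for precisely the advertised range of $\lambda$.
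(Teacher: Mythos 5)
Your proposal is correct and follows essentially the same route as the paper: both invoke Theorem 2.4 of Bonanno--Chinn\`{\i} with the same $\Phi$, $\Psi$, $\bar v$, and $r=\frac{(c_{3}K)^{p^{+}}}{p^{+}}$, reuse the structural regularity and the inequality \eqref{3.8} together with $(F5)$ to get the strict separation $\sup_{\Phi\le r}\Psi < r\,\Psi(\bar v)/\Phi(\bar v)$, and transfer the coercivity of $I_\lambda$ from the proof of Theorem \ref{t4}. Your two extra observations --- that the $c_{3}$-bound (hence the separation inequality) survives passage from the open to the closed sublevel set, and that $\Lambda_{r}$ sits inside Bonanno's window $\left]\Phi(\bar v)/\Psi(\bar v),\, r/\sup_{\Phi\le r}\Psi\right[$ --- are points the paper leaves implicit, and they only make the argument tighter.
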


\begin{proof} So far we have already obtained that $\Phi$ is a continuously G\^{a}teaux differentiable,
 coercive and sequentially weakly lower semi-continuous functional whose G\^{a}teaux derivative admits a
  continuous inverse on $W^{\ast }$, and $\Psi$ is continuously G\^{a}teaux differentiable functional
   whose G\^{a}teaux derivative is compact, and $\inf_{x\in X}\Phi (u)=\Phi(0)=\Psi(0)=0$.
    Moreover, since $I_{\lambda}$ is coercive on $\Lambda _{d}$, it is coercive on $\Lambda _{r}$ as well because of the
     relation $\Lambda _{r}\subseteq\Lambda _{d}$. The rest of the proof is quite similar to that of Theorem 3.8.
     However, some remarks are in order. Since, apparently
      $$\Phi(\overline{v})=\hat{d}=\frac{w(0)d^{p(0)}}{^{p(0)}}+\frac{w(T)d^{p(T)}}{^{p(T)}}+\sum_{k=1}^{T}\frac{q(k)}{p(k)}d^{p(k)}$$
      and
       $$\Psi(\overline{v})=\sum_{k=1}^{T}F(k,d),$$
       it reads, from (3.8) and (F5), that
\begin{equation*}
\sup_{u\in \Phi ^{-1}(-\infty,r)}\Psi(u)< Tc_{0} (1+
\max\{c_{3}^{\alpha^{+}},c_{3}^{\alpha^{-}}\})\leq r
\frac{\Psi(\overline{v})}{\Phi(\overline{v})}.
\end{equation*}
Overall, all assumptions of Theorem 2.4 of \cite {Bonanno} are
verified. Therefore, for each $\lambda \in \Lambda _{r}$, the
functional $I_{\lambda}$ admits at least three distinct critical
points that are weak solutions of Problem (1.1).
\end{proof}
Here, we present the following example to illustrate the result of
Theorem \ref{t4}.

\begin{example} Let all assumptions in Example \ref{e3.3} hold, that
is, $T=10$, $p(k)=\frac{2}{11}k+3$, $q(k)=2^{k}$,
$w(k)=e^{k(10-k)^2}$, $\alpha(k)=2$ and
$f(k,x)=e^{(k+2)(k-13)}\frac{x}{(x^2+10^{-11})^2}$ for
$k=1,2,3,...,10$ and $x\in\mathbb{R}$. Hence $p^-=3$, $p^+=5$,
$\alpha^+=2$,
  $A=2^{11}$ and $K=22^{-\frac{4}{5}}e^{-\frac{98}{5}}$ and $F(k,x)=\frac{10^{11}}{2}e^{(k+2)(k-13)}\frac{x^2}{x^2+10^{-11}}$.
   Put $c_0=0.000012$, $c_3=0.05$ and $d=0.0000000005$ satisfying  $\frac{1}{\sqrt[p^+]{A}}>d>c_{3}\frac{K}{\sqrt[p^+]{A}}$. Simple
    calculations with Maple software  show that
$$
F(k,t)\leq 0.000012(1+|t|^{2}),  \ \ \ \ \ \forall(k,t)\in
\mathbb{Z}[1,10]\times\mathbb{R},
$$
$$
\frac{p^{+}}{(c_{3}K)^{p^{+}}}Tc_{0} (1+
\max\{c_{3}^{\alpha^{+}},c_{3}^{\alpha^{-}}\})\simeq2.086867833\times10^{13},$$
and
$$\hat{d}^{-1}\sum_{k=1}^{T}F(k,d)\simeq1.338709020\times10^{16},$$
  Therefore the conditions $F1$ and $F5$ hold.
 Then, by Theorem \ref{t4}, for every $\lambda\in]7.469883186\times10^{-17},\infty[$ the
problem
$$\begin{cases}
-\Delta(e^{k(10-k)^2}|
\Delta u(k-1)|^{p(k-1)-2}\Delta u(k-1))+2^{k}| u(k)|^{p(k)-2} u(k)=\lambda \left(\frac{ u(k)^2e^{(k+2)(k-13)}}{(u(k)^2+10^{-11})^2}\right), \\
u(0)=u(11)=0,
\end{cases}$$
for every $k \in [1,10]$, has at least one non-trivial solution and
by Corollary \ref{c10}, for every
$\lambda\in]7.469883186\times10^{-17},4.791870305\times10^{-14}[$
the above problem has at least three solutions.
 \end{example}

%

\end{document}